\newtheorem{theorem}{Theorem}[section]
\newtheorem{proposition}[theorem]{Proposition}
\newtheorem{lemma}[theorem]{Lemma}
\newtheorem{corollary}[theorem]{Corollary}
\newtheorem{definition}[theorem]{Definition}
\newtheorem{conjecture}[theorem]{Conjecture}
\theoremstyle{plain}
\theoremstyle{remark}
\newtheorem{remark}[theorem]{Remark}
\newtheorem{example}[theorem]{Example}
\newcommand{\C}{{\mathbb C}}
\newcommand{\Q}{{\mathbb Q}}
\newcommand{\R}{{\mathbb R}}
\newcommand{\D}{{\mathbb D}}
\newcommand{\T}{{\mathbb T}}
\newcommand{\Z}{{\mathbb Z}}
\newcommand{\fp}{\mathfrak p}
\DeclareMathOperator{\GF}{GF}
\DeclareMathOperator{\lcm}{lcm}
\DeclareMathOperator{\Span}{Span}
\DeclareMathOperator{\Norm}{N}
\DeclareMathOperator{\card}{card}
\newcommand{\bF}{{\mathbb F}}
\newcommand{\cO}{\mathcal{O}}
\author{Keira Gunn}
\address{
Keira Gunn \\
Department of Mathematics and Statistics\\
University of Calgary\\
AB T2N 1N4, Canada
}
\email{keira.gunn1@ucalgary.ca}
\author{Khoa D.~Nguyen}
\address{
Khoa D.~Nguyen \\
Department of Mathematics and Statistics\\
University of Calgary\\
AB T2N 1N4, Canada
}
\email{dangkhoa.nguyen@ucalgary.ca}
\author{J.~C.~Saunders}
\address{
J.~C.~Saunders \\
Department of Mathematics and Statistics\\
University of Calgary\\
AB T2N 1N4, Canada
}
\email{john.saunders1@ualgary.ca}
\keywords{Positive characteristic tori, entropy, Artin-Mazur zeta function}
\subjclass[2010]{Primary: 37A35, 37P20. Secondary: 11T99}
\begin{document}
	\title[Endomorphisms of positive characteristic tori]{Endomorphisms of positive characteristic tori: entropy and zeta function}
	
	\date{May 2022}
	
	\begin{abstract}
	Let $F$ be a finite field of order $q$ and characteristic $p$. Let $\mathbb{Z}_F=F[t]$, $\mathbb{Q}_F=F(t)$, $\mathbb{R}_F=F((1/t))$ equipped with the discrete valuation for which $1/t$ is a uniformizer, and let $\mathbb{T}_F=\mathbb{R}_F/\mathbb{Z}_F$ which has the structure of a compact abelian group. Let $d$ be a positive integer and let $A$ be a $d\times d$-matrix with entries in $\mathbb{Z}_F$ and non-zero determinant. The multiplication-by-$A$ map is a surjective endomorphism on $\mathbb{T}_F^d$. First, we compute the entropy of this endomorphism; the result and arguments are analogous to those for the classical case $\mathbb{T}^d=\mathbb{R}^d/\mathbb{Z}^d$. Second and most importantly, we resolve the 
	algebraicity problem for the Artin-Mazur zeta function of all such endomorphisms. As a consequence of our main result, we provide a complete characterization and an explicit formula related to the entropy when the zeta function is algebraic.   
	\end{abstract}
	
	\maketitle

	\section{Positive characteristic tori and statements of the main results}\label{sec:first}
	The tori $\T^d:=\R^d/\Z^d$ where $d$ is a positive integer play an important role in number theory, dynamical systems, and many other areas of mathematics. In this paper, we study the entropy and algebraicity of the Artin-Mazur zeta function of a surjective endomorphism on the so called positive characteristic tori.
	
	Throughout this paper, let $F$ be the finite field of order $q$ and characteristic $p$. Let
	$\Z_{F}=F[t]$ be the polynomial ring over $F$, $\Q_{F}=F(t)$, and 
	$$\R_{F}=F((1/t))=\left\{\sum_{i\leq m} a_it^i:\ m\in\Z,\ a_i\in F\ \text{for}\ i\leq m\right\}.$$
	The field $\R_{F}$ is equipped with the discrete valuation 
	$$v: \R_{F}\rightarrow \Z\cup\{\infty\}$$ 
	given by $v(0)=\infty$ and 
	$v(x)=-m$ where $x=\displaystyle\sum_{i\leq m} a_it^i$ with $a_m\neq 0$; in fact $\R_{F}$ is the completion of $\Q_{F}$ with respect to this valuation. Let $\vert\cdot\vert$ denote the non-archimedean absolute value $\vert x\vert=q^{-v(x)}$ for $x\in\R_{F}$. We fix an algebraic closure of $\R_F$ and the   absolute value $\vert\cdot\vert$ can be extended uniquely 
	to the algebraic closure (see Proposition~\ref{prop:extending absolute value}). 
	Let $\T_{F}=\R_{F}/\Z_{F}$ and let $\pi:\ \R_{F}\rightarrow \T_{F}$ be the quotient map. Every element $\alpha\in \T_{F}$ has the unique preimage $\tilde{\alpha}\in \R_{F}$ of the form
	$$\tilde{\alpha}=\sum_{i\leq -1} a_it^i.$$ 
	This yields a homeomorphism 
	$\T_{\bF}\cong \displaystyle\prod_{i\leq -1}F$ of compact abelian groups. Let $\mu$ be the probability Haar measure on $\T_{F}$ and let $\rho$ be the metric on $\T_F$ given by
	$\rho(\alpha,\beta):=\vert \tilde{\alpha}-\tilde{\beta}\vert$. We fix a positive integer
	$d$ and let $\mu^d$ be the product measure on $\T_F^d$.
	
	The analytic number theory, more specifically the theory of characters and $L$-functions, on $\T_F$ has been studied since at least 1965 in work of Hayes \cite{Hay65_TD}. Some relatively recent results include work of Liu-Wooley \cite{LW10_WP} on Waring's problem and the circle method in function fields
	and work of Porritt \cite{Por18_AN} and Bienvenu-L\^{e} \cite{BL19_LA} on correlation between the M\"obius function and a character over $\Z_F$. For a recent work in the ergodic theory side, we refer the readers to the paper by Bergelson-Leibman \cite{BL16_AW} and its reference in which the authors establish a Weyl-type equidistribution theorem.
	
	Let $A\in M_d(\Z_F)$ having non-zero discriminant. The multiplication-by-$A$ map yields a surjective endomorphism of $\T_F^d$ for which $\mu^d$ is an invariant measure, we abuse the notation by using $A$ to denote this endomorphism. Our first result is the following:
	\begin{theorem}\label{thm:entropy}
	Let $h(\mu^d,A)$ denote the entropy of $A$ with respect to $\mu^d$ and let $h(A)$ denote the topological entropy of $A$. Let $\lambda_1,\ldots,\lambda_d$ denote the eigenvalues of $A$. We have:
	$$h(A)=h(\mu^d,A)=\sum_{i=1}^d\log\max\{\vert \lambda_i\vert,1\}.$$
	\end{theorem}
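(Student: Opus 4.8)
The plan is to prove that both $h(A)$ and $h(\mu^d,A)$ coincide with a single subgroup-index growth rate attached to $A$, and then to evaluate that rate by Pontryagin duality, reducing everything to an elementary count of polynomial vectors.

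\emph{Step 1 (reduction to a coset-index rate).} I would first use that $\alpha\mapsto\tilde\alpha$ is a homomorphism (addition in $\R_F$ is coefficientwise), so $\rho$ is translation invariant and each $A^{j}\colon\T_F^{d}\to\T_F^{d}$ is a continuous homomorphism. Then for $\epsilon>0$ and $n\ge1$ the set $H_n(\epsilon):=\bigcap_{j=0}^{n-1}A^{-j}B_\epsilon$, where $B_\epsilon=\{\alpha:\rho(\alpha,0)\le\epsilon\}$, is a closed finite-index subgroup of $\T_F^{d}$, a minimal $(n,\epsilon)$-spanning set is exactly a transversal of $H_n(\epsilon)$, and since $H_{n+m}(\epsilon)=H_n(\epsilon)\cap A^{-n}H_m(\epsilon)$ with $[\T_F^{d}:A^{-n}S]=[\T_F^{d}:S]$ for any closed $S$ (as $A$ is onto), the limit $c_\epsilon:=\lim_n\tfrac1n\log[\T_F^{d}:H_n(\epsilon)]$ exists and $h(A)=\sup_{k\ge1}c_{q^{-k}}$. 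For the measure entropy I would take $\xi_\epsilon$ to be the partition of $\T_F^{d}$ into cosets of $B_\epsilon$; then $\bigvee_{j=0}^{n-1}A^{-j}\xi_\epsilon$ is the coset partition of $H_n(\epsilon)$, all of whose atoms have equal $\mu^d$-mass, whence $h(\mu^d,A,\xi_\epsilon)=c_\epsilon$. Since the balls $B_{q^{-k}}$ shrink to $\{0\}$, the partitions $\xi_{q^{-k}}$ generate the Borel $\sigma$-algebra, so the Kolmogorov--Sinai machinery gives $h(\mu^d,A)=\sup_k h(\mu^d,A,\xi_{q^{-k}})=\sup_k c_{q^{-k}}=h(A)$ (for the inequality $h(\mu^d,A)\le h(A)$ one may alternatively invoke the variational principle). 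It remains to show that $L:=\sup_k c_{q^{-k}}$ equals $\sum_i\log\max\{|\lambda_i|,1\}$.

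\emph{Step 2 (duality and simplification of the target).} Next I would note that each $\lambda_i$, being a root of a monic polynomial over $F[t]$, is integral over $F[t]$, and a one-line valuation argument (if $v(\lambda_i)>0$ then in the monic integral relation the term $\lambda_i^{\deg}$ has strictly larger valuation than every other term, since the coefficients lie in $F[t]$ and have non-positive valuation) shows $|\lambda_i|\ge1$; hence $\sum_i\log\max\{|\lambda_i|,1\}=\sum_i\log|\lambda_i|=\log|\det A|=(\deg_t\det A)\log q$, and it suffices to prove $L=\log|\det A|$. Then I would dualise, using the self-duality of $\R_F$ under the pairing $\langle x,y\rangle=(\text{coefficient of }t^{-1}\text{ in }xy)$: one gets $\widehat{\T_F}\cong F[t]$, hence $\widehat{\T_F^{d}}\cong F[t]^{d}$ with $\widehat A=A^{\top}$; moreover $B_{q^{-k}}^{\perp}$ is the $F$-subspace $V_k$ of polynomial vectors of degree $\le k-2$ (of dimension $d(k-1)$), so $H_n(q^{-k})^{\perp}=\sum_{j=0}^{n-1}(A^{\top})^{j}V_k$ and $[\T_F^{d}:H_n(q^{-k})]=q^{\dim_F\sum_{j=0}^{n-1}(A^{\top})^{j}V_k}$. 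The problem becomes: show $\tfrac1n\dim_F\sum_{j=0}^{n-1}(A^{\top})^{j}V_k\to\deg_t\det A$ for all large $k$ (smaller $k$ only shrink the space).

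\emph{Step 3 (the count, and the main obstacle).} Write $\phi=A^{\top}$, an injective endomorphism of $F[t]^{d}$ with $[F[t]^{d}:\phi^{n}F[t]^{d}]=q^{n\deg_t\det A}$, so $\dim_F(F[t]^{d}/\phi^{n}F[t]^{d})=n\deg_t\det A$. For $k$ larger than the degrees occurring in a Smith presentation of $A^{\top}$ over $F[t]$ one has $V_k+\phi F[t]^{d}=F[t]^{d}$, hence inductively $\sum_{j=0}^{n-1}\phi^{j}V_k+\phi^{n}F[t]^{d}=F[t]^{d}$; thus $\sum_{j=0}^{n-1}\phi^{j}V_k$ surjects onto $F[t]^{d}/\phi^{n}F[t]^{d}$ and $\dim_F\sum_{j=0}^{n-1}\phi^{j}V_k\ge n\deg_t\det A$, giving $L\ge\log|\det A|$. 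For the matching upper bound, $\dim_F\sum_{j=0}^{n-1}\phi^{j}V_k=n\deg_t\det A+\dim_F\!\big(\sum_{j=0}^{n-1}\phi^{j}V_k\cap\phi^{n}F[t]^{d}\big)$, and if $\sum_{j=0}^{n-1}\phi^{j}v_j=\phi^{n}u$ with $v_j\in V_k$ and $u\in F[t]^{d}$, then $u=\sum_{i=1}^{n}(\phi^{-1})^{i}v_{n-i}$; since the eigenvalues of $\phi^{-1}$ are the $\lambda_i^{-1}$, all of absolute value $\le1$, the powers $(\phi^{-1})^{i}$ are uniformly bounded, forcing $\deg u=O_k(1)$, so $u$ — and hence $\phi^{n}u$ — ranges over a subspace of dimension $O_k(1)$; thus $L\le\log|\det A|$, and $L=\log|\det A|=\sum_i\log\max\{|\lambda_i|,1\}$, proving the theorem. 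The only non-formal input is the boundedness of $\{(A^{\top})^{-n}\}_{n\ge0}$ — over the complete field $\R_F$ a matrix all of whose eigenvalues have absolute value $\le1$ has bounded powers, proved by triangularising over a finite extension and using that the binomial coefficients appearing in powers of a unipotent block have absolute value $\le1$ — and I expect the real work to be the bookkeeping in Step 3, tracking the interaction between the $F[t]$-lattice $F[t]^{d}$ and the valuation filtration from $F[[1/t]]$; conceptually, $L=\log|\det A|=\sum_i\log^{+}|\lambda_i|$ is the ``single-place'' shadow of the adelic entropy formula for the natural extension $(\mathbb{A}_{F(t)}^{d}/F(t)^{d},\times A)$, which collapses to the place at infinity precisely because each $\lambda_i$ is an $\{\infty\}$-integer.
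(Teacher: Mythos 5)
Your Step 2 contains a genuine error that propagates through the rest of the argument. The one-line valuation argument for $|\lambda_i|\ge 1$ is incorrect on two counts. First, integrality over $F[t]$ controls the absolute values of $\lambda_i$ at the \emph{finite} places of $F(t)$, not at the place at infinity: with respect to the valuation $v$ used throughout the paper, the ring $F[t]$ satisfies $v(a)\le 0$ for nonzero $a$, so $F[t]$ is not the valuation ring and integrality over it gives no bound of the form $v(\lambda_i)\le 0$. Second, the ultrametric reasoning is upside-down: if $v(\lambda_i)>0$ then $\lambda_i^{\deg}$ has the \emph{largest} valuation among the terms of the monic relation, and having a single term of strictly largest valuation does not obstruct the sum from vanishing (only a single term of strictly \emph{smallest} valuation would). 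A concrete counterexample is
$$A=\begin{pmatrix}0&1\\ -t&t^2\end{pmatrix},\qquad \det(XI-A)=X^2-t^2X+t,$$
whose Newton polygon with respect to $v$ has vertices $(0,-1)$, $(1,-2)$, $(2,0)$, giving one eigenvalue of absolute value $q^{2}$ and one of absolute value $q^{-1}<1$. Consequently $\sum_i\log\max\{|\lambda_i|,1\}=2\log q$ while $\log|\det A|=\log q$: the simplification of the target to $\log|\det A|$ fails, and the theorem you are trying to prove genuinely requires the truncation $\log^{+}$.

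This also breaks Step 3. Once some $|\lambda_i|<1$, the matrix $\phi^{-1}=(A^{\top})^{-1}$ has an eigenvalue of absolute value $>1$, so $\{(\phi^{-1})^{n}\}_{n\ge 0}$ is not bounded and your degree bound on $u$ in the intersection $\sum_{j<n}\phi^{j}V_k\cap\phi^{n}F[t]^d$ collapses; the whole ``bookkeeping'' you flag as the remaining work is precisely where the contribution from the contracting directions must be isolated and shown to be negligible. The duality framework of Step 1 is a reasonable and genuinely different route from the paper's argument (the paper estimates Haar measure of Bowen balls directly, via norms adapted to the primary/Jordan decomposition of $A$ in Section 2, Proposition 3.1, Lemma 3.2, and the Brin--Katok formula, and it never needs the false simplification $\sum\log^{+}|\lambda_i|=\log|\det A|$). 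To repair your Step 3 you would need to split $\R_F^d$ into the expanding and non-expanding $A$-invariant subspaces and run the dimension count only on the expanding part, which essentially re-derives the decomposition the paper sets up in Corollary 2.4.
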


	\begin{remark}
	This is the same formula as the entropy of surjective endomorphisms of $\T^d$. The proof is not surprising either: we use similar arguments to the classical ones presented in the books by Walters \cite{Wal82_AI} and Viana-Oliveira \cite{VO16_FO} together with several adaptations to the non-archimedean setting of $\R_F^d$ and $\T_F^d$. What is important is the relationship between the entropy and the Artin-Mazur zeta function in the next main result.
	\end{remark}
	
	Let $f:\ X\rightarrow X$ be  a map from a topological space $X$ to itself. For each
	$k\geq 1$, let $N_k(f)$ denote the number of \emph{isolated} fixed points of $f^k$. Assume that $N_k(f)$ is 
	finite for every $k$, then one can define the Artin-Mazur zeta function \cite{AM65_OP}:
	$$\zeta_f(z)=\exp\left(\sum_{k=1}^{\infty} \frac{N_k(f)}{k} z^k\right).$$
	When $X$ is a compact differentiable manifold and $f$ is a smooth map such that $N_k(f)$ grows at most exponentially in $k$, the  question of whether $\zeta_f(z)$ is algebraic is stated in \cite{AM65_OP}. The rationality of $\zeta_f(z)$ when $f$ is an Axiom A diffeomorphism is established by Manning \cite{Man71_AA} after earlier work by Guckenheimer \cite{Guc70_AA}. 
	On the other hand, when $X$ is an algebraic variety defined over a finite field and $f$ is the Frobenius morphism,  the function $\zeta_f(z)$ is precisely the classical zeta function of the variety $X$ and its rationality is conjectured by Weil \cite{Wei49_NO} and first established by 
	Dwork \cite{Dwo60_OT}. For the dynamics of a univariate rational function, rationality of $\zeta_f(x)$ is established by  Hinkkanen in characteristic zero \cite{Hin94_ZF} while Bridy \cite{Bri12_TO,Bri16_TA} obtains both rationality and \emph{transcendence} results over positive characteristic when $f$ belongs to certain special families of rational functions. As before, let $A\in M_d(\Z_F)$ and we use $A$ to denote the induced endomorphism on $\T_F^d$. We will show that $N_k(A)<\infty$ for every $n$ and hence one can define the zeta function $\zeta_A(z)$. As a consequence of our next main result, we resolve the algebraicity problem for $\zeta_A(z)$: we provide a complete characterization and an explicit formula when $\zeta_A(z)$ is algebraic. We need a couple of definitions before stating our result.

	Let $K$ be a finite extension of $\R_F$. Let 
	$$\cO_K:=\{\alpha\in K:\ \vert\alpha\vert\leq 1\},$$ 
	$$\cO_K^{*}=\{\alpha\in K:\ \vert\alpha\vert=1\},\ \text{and}$$
	$$\fp_K:=\{\alpha\in K:\ \vert\alpha\vert <1\}$$
	respectively denote the valuation ring, unit group, and maximal ideal. In particular: 
	$$\cO:=\cO_{\R_F}=F[[1/t]]\ \text{and}\ 
	\fp:=\fp_{\R_F}=\displaystyle\frac{1}{t} F[[1/t]]=\left\{\sum_{i\leq -1}a_i t^i:\ a_i\in F\ \forall i\right\}.$$ 
	Note that $\fp$ is the compact open subset of $\R_F$ that is both the open ball of radius $1$ and closed ball of radius $1/q$ centered at $0$. The field $\cO_K/\fp_K$ is a finite extension of 
	$\cO/\fp=F$ and the degree of this extension is called the inertia degree of $K/\R_F$ 
	\cite[p.~150]{Neu99_AN}. Let $\delta$ be this inertia degree, then $\cO_K/\fp_K$ is isomorphic to the finite field $\GF(q^{\delta})$. By applying Hensel's lemma \cite[pp.~129--131]{Neu99_AN} for the polynomial $X^{q^{\delta}-1}-1$, we have that $K$ contains all the roots of $X^{q^{\delta}-1}-1$. These roots together with $0$ form a unique copy of $\GF(q^{\delta})$ in $K$ called the Teichm\"uller representatives. This allows us to regard $\GF(q^{\delta})$ as a subfield of $K$; in fact $\GF(q^{\delta})$ is exactly the set of all the roots of unity in $K$ together with $0$. For every $\alpha\in \cO_K$, we can express uniquely:
	\begin{equation}\label{eq:alpha0 and alpha1}
	\alpha=\alpha_{(0)}+\alpha_{(1)}
	\end{equation}
	where $\alpha_{(0)}\in \GF(q^{\delta})$ and $\alpha_{(1)}\in \fp_K$.
	 
	\begin{definition}
	Let $\alpha$ be algebraic over $\R_F$ such that $\vert\alpha\vert\leq 1$. Let $K$ be a finite extension of $\R_F$ containing $\alpha$. 
	We call $\alpha_{(0)}$ and $\alpha_{(1)}$ in \eqref{eq:alpha0 and alpha1} respectively the constant term and $\fp$-term of $\alpha$; they are independent of the choice of $K$. When $\vert\alpha\vert=1$, the order of $\alpha$ modulo $\fp$ means the order of $\alpha_{(0)}$ in the multiplicative group
	$GF(q^{\delta})^*$ where $\delta$ is the inertia degree of $K/\R_F$; this is independent of the choice of $K$ as well. In fact, this order is the smallest positive integer $n$ such that $\vert \alpha^n-1\vert <1$.	
	\end{definition}
	
	We identify the rational functions in $\C(z)$ to the corresponding Laurent series in $\C((z))$. 
\begin{definition}	
	A series $f(z)\in \C((z))$ is called D-finite if all of its formal derivatives $f^{(n)}(z)$ for $n=0,1,\ldots$ span a finite dimensional vectors space over $\C(z)$. Equivalently, there 
	exist an integer $n\geq 0$ and $a_0(z),\ldots,a_{n}(z)\in \C[z]$ with $a_{n}\neq 0$ such that:
	$$a_n(z)f^{(n)}(z)+a_{n-1}f^{(n-1)}(z)+\ldots+a_0(z)f(z)=0.$$ 
\end{definition}

\begin{remark}\label{rem:algebraic implies D-finite}
Suppose that $f(z)\in\C[[z]]$ is algebraic then $f$ is D-finite, see
\cite[Theorem~2.1]{Sta80_DF}.
\end{remark}
	
	Our next main result is the following:
	\begin{theorem}\label{thm:zeta}
	Let $A\in M_d(\Z_F)$ and put 
	$\displaystyle r(A)=\prod_{\lambda}\max\{1,\vert\lambda\vert\}$
	where $\lambda$ ranges over all the $d$ eigenvalues of $A$; we have
	$r(A)=e^{h(A)}$ when $\det(A)\neq 0$ thanks to Theorem~\ref{thm:entropy}. Among the $d$ eigenvalues of $A$, let $\mu_1,\ldots,\mu_M$ be all the eigenvalues that are roots of unity and let $\eta_1,\ldots,\eta_N$ be all the eigenvalues that have absolute value $1$ and are not roots of unity. For $1\leq i\leq M$, let $m_i$ denote the order of $\mu_i$ modulo $\fp$. For $1\leq i\leq N$, let $n_i$ denote the order of $\eta_i$ modulo $\fp$.
	We have:
	\begin{itemize}
	\item [(a)] Suppose that for every $j\in \{1,\ldots,N\}$, there exists $i\in\{1,\ldots,M\}$ such that
	$m_i\mid n_j$. Then $\zeta_A(z)$ is algebraic and 
	$$\zeta_A(z)=(1-r(A)z)^{-1}\prod_{1\leq \ell\leq M} \prod_{1\leq i_1<i_2<\ldots<i_{\ell}\leq M}R_{A,i_1,\ldots,i_{\ell}}(z)$$
	where $\displaystyle R_{A,i_1,\ldots,i_{\ell}}(z):=\left(1-\left(r(A)z\right)^{\lcm(m_{i_1},\ldots,m_{i_\ell})}\right)^{(-1)^{\ell+1}/\lcm(m_{i_1},\ldots,m_{i_\ell})}$.

	\item [(b)] Otherwise suppose there exists $j\in\{1,\ldots,N\}$ such that for every $i\in\{1,\ldots,M\}$, we have $m_i\nmid n_j$. Then the series $\displaystyle\sum_{k=1}^{\infty}N_k(A)z^k$ converges in the open disk
	$\{z\in\C:\ \vert z\vert < 1/r(A)\}$ and it is not $D$-finite.  
	 Consequently, the function $\zeta_A(z)$ is transcendental.
	\end{itemize}
	\end{theorem}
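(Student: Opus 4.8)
First I would identify the fixed point set of $A^k$ on $\T_F^d$ with the subgroup $\ker(A^k-I\colon\T_F^d\to\T_F^d)$. As a closed subgroup of a compact group it is either finite --- in which case all of its points are isolated --- or infinite --- in which case it is an infinite compact group and so has no isolated point. It is infinite exactly when $\det(A^k-I)=0$ (scale a nonzero element of $\ker(A^k-I)$ over $\Q_F$ into $\fp^d$ and divide repeatedly by $t$), and when $\det(A^k-I)\ne 0$ one has $|\ker(A^k-I)|=|\det(A^k-I)|$ by reducing to $d=1$ via Smith normal form over the PID $F[t]$, where $|\ker(B\colon\T_F\to\T_F)|=q^{\deg B}=|B|$ for $0\ne B\in F[t]$. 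Since $A^k-I\in M_d(F[t])$ and $\det(A^k-I)=\prod_i(\lambda_i^k-1)$, multiplicativity of $|\cdot|$ on $\overline{\R_F}$ gives
$$N_k(A)=\begin{cases}\displaystyle\prod_{i=1}^d|\lambda_i^k-1| & \text{if }\lambda_i^k\ne 1\text{ for all }i,\\[1ex] 0 & \text{otherwise.}\end{cases}$$
In particular $N_k(A)$ is finite, and $N_k(A)\le r(A)^k$ always, so $\sum_k N_k(A)z^k$ converges for $|z|<1/r(A)$; this settles the first assertion in (b).

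\textbf{Step 2 (evaluating the factors).} Next I would compute each $|\lambda_i^k-1|$. By the ultrametric it equals $|\lambda_i|^k$ if $|\lambda_i|>1$ and $1$ if $|\lambda_i|<1$; if $|\lambda_i|=1$ it equals $1$ unless the order of $\lambda_i$ modulo $\fp$ divides $k$. Because $\car F=p$, every root of unity is a prime-to-$p$ Teichm\"uller representative, so each $\mu_i$ is its own constant term with $m_i=\order(\mu_i)$, while each $\eta_j$ satisfies $\eta_j^{\,n_j}=1+\beta_j$ with $0<|\beta_j|<1$; the identity $(1+x)^{p^a}=1+x^{p^a}$ in characteristic $p$ then gives $|\eta_j^{\,k}-1|=|\beta_j|^{\,p^{v_p(k/n_j)}}$ whenever $n_j\mid k$, where $v_p$ denotes $p$-adic valuation. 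Hence $\lambda_i^k=1$ for some $i$ iff $m_i\mid k$ for some $i$, and when $N_k(A)\ne0$,
$$N_k(A)=r(A)^k\!\!\prod_{j\,:\,n_j\mid k}\!|\beta_j|^{\,p^{v_p(k/n_j)}}.$$

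\textbf{Step 3 (case (a)).} Under the hypothesis of (a), $n_j\mid k$ forces $m_i\mid k$ for some $i$, so $N_k(A)=r(A)^k$ whenever it is nonzero, and $N_k(A)=0$ iff $\lcm(m_i\colon i\in S)\mid k$ for some nonempty $S\subseteq\{1,\dots,M\}$. Inclusion--exclusion gives $N_k(A)=r(A)^k\sum_{S}(-1)^{|S|}[\lcm(m_i\colon i\in S)\mid k]$, and substituting this into $\exp\!\big(\sum_k N_k(A)z^k/k\big)$ and using $\sum_{L\mid k}(r(A)z)^k/k=-\tfrac1L\log\!\big(1-(r(A)z)^L\big)$ for each $S$ (with $L=1$ for $S=\varnothing$) produces exactly the displayed formula. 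Each factor has the form $\big(1-(r(A)z)^L\big)^{c}$ with $c\in\Q$, hence is algebraic over $\C(z)$, and the finite product is algebraic; this proves (a).

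\textbf{Step 4 (case (b), the main obstacle).} By Remark~\ref{rem:algebraic implies D-finite}, and since $z\,\zeta_A'/\zeta_A=\sum_k N_k(A)z^k=:f(z)$ would be algebraic (hence $D$-finite) if $\zeta_A$ were, it suffices to show $f$ is not $D$-finite. Fix $j^*$ as in (b) and $L=\lcm(m_1,\dots,m_M)$; note $L\ge 2$ (otherwise $1$ is an eigenvalue and we are in case (a)) and $\gcd(L,p)=1$. On the progression $k=n_{j^*}(1+L\ell)$ one checks $m_i\nmid k$ for all $i$ and $v_p(k/n_j)=v_p(1+L\ell)$ for all $j$ with $n_j\mid k$, so by Step 2
$$N_{n_{j^*}(1+L\ell)}(A)=c_0\,\rho^{-\ell}B(\ell)^{\,p^{v_p(1+L\ell)}},\qquad c_0=r(A)^{n_{j^*}},\ \ \rho^{-1}=r(A)^{n_{j^*}L},$$
where $B(\ell)=\prod_{j\,:\,n_j\mid n_{j^*}(1+L\ell)}|\beta_j|$ is periodic in $\ell$ with some period $T$ prime to $p$ and $B(0)=\prod_{j\,:\,n_j\mid n_{j^*}}|\beta_j|\in(0,1)$. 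If $f$ were $D$-finite, the standard closure operations (taking the section to this progression, then to $\ell\equiv 0\pmod T$, compressing, rescaling, and substituting $u\mapsto w^{L'}$) would force
$$H(w):=\sum_{m\ge 1,\ m\equiv 1\,(L')}\theta^{\,p^{v_p(m)}}w^m\qquad\big(\theta:=B(0)\in(0,1),\ L':=LT,\ \gcd(L',p)=1\big)$$
to be $D$-finite; its radius of convergence is $1$. Grouping $H$ by $v_p(m)$ gives $H(w)=\sum_{a\ge 0}\theta^{\,p^a}S_a(w^{p^a})$ with $S_a(v)=\sum_{m'\equiv p^{-a}(L'),\,p\nmid m'}v^{m'}$ rational, whose poles lie among the roots of unity of order dividing $D_0:=pL'$ and which (by a short residue computation) has a genuine simple pole at $1$ and at every primitive $p$-th root of unity. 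For large $J$ and $\omega$ a primitive $p^J$-th root of unity, letting $w\to\omega$ radially from inside shows that the terms $a\le J-2$ remain bounded (since $\order(\omega^{p^a})=p^{J-a}\nmid D_0$), the term $a=J-1$ blows up like $C'/(1-r)$ with $|C'|\asymp\theta^{\,p^{J-1}}/(p^{J-1}D_0)$, and the terms $a\ge J$ contribute $C/(1-r)$ with $|C|\lesssim\theta^{\,p^J}/(p^J D_0)$; since $\theta<1$, $|C'|/|C|\gtrsim p\,\theta^{-p^{J-1}(p-1)}\to\infty$, so $C+C'\ne 0$ for $J$ large and $H$ is unbounded near $\omega$. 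Thus $H$ is singular at a dense subset of its circle of convergence, contradicting that a $D$-finite power series has only finitely many singular points. The technical heart of the whole theorem is exactly this last estimate --- showing that the infinitely many accumulated lower-order poles at $\omega$ cannot cancel the dominant one.
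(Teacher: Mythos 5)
Your Steps 1--3 match the paper's own approach: Lemma~\ref{lem:N_k(A) formula} for $N_k(A)=|\det(A^k-I)|$ via Smith normal form, Proposition~\ref{prop:main formulas} for $|\lambda^k-1|$ via the decomposition into constant and $\fp$-term and $(1+x)^{p^a}=1+x^{p^a}$ (your $|\beta_j|^{p^{v_p(k/n_j)}}$ equals the paper's $|\eta_{j,(1)}|^{p^{v_p(k)}}$ since $n_j$ is coprime to $p$), and the same inclusion--exclusion for part~(a). Your Step~4 for part~(b) is a genuinely different argument. The paper's proof is arithmetic: the normalized coefficients $c_k=N_k(A)/r(A)^k$ lie in the number field $E=\Q(p^{1/\tau})$; assuming D-finiteness gives a linear recurrence $P_0(k)c_k+\cdots+P_s(k)c_{k-s}=0$; at $k=n_1p^{\ell}$ the $p$-adic size $|c_k|_p=|Q_1|_p^{p^{\ell}}$ grows doubly exponentially while $|c_{k-i}|_p$ stays bounded for $1\le i\le s$ because $v_p(n_1p^{\ell}-i)$ is bounded; this forces $|P_0(n_1p^{\ell})|_p$ to be exponentially small, contradicting the product formula on $E$. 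You instead use closure properties of D-finite series to extract the auxiliary function $H(w)=\sum_{m\equiv1\,(L')}\theta^{p^{v_p(m)}}w^m$ and show, by grouping by $v_p(m)$ and a residue estimate, that $H$ blows up radially at every primitive $p^J$-th root of unity for $J$ large --- infinitely many boundary singularities, which no D-finite function can have. The key inequality (the $a=J-1$ term dominates the $a\ge J$ tail because $\theta^{p^{J-1}}/p^{J-1}\gg\theta^{p^J}$ for $\theta\in(0,1)$) is correct, and the reduction to $H$ via arithmetic-progression sections, rescaling, and the substitution $w\mapsto w^{L'}$ is legitimate. Two small nits: your parenthetical ``$L\ge 2$'' is false when $M=0$ (the empty $\lcm$ is $1$), though nothing in the argument actually uses it; and the tail bound $|C|\lesssim\theta^{p^J}/(p^JD_0)$ is a little optimistic --- the robust uniform bound is $\lesssim\theta^{p^J}/(1-r)$ for the whole $a\ge J$ contribution, which still loses decisively to the $a=J-1$ term. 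Conceptually, your analytic route is closer to the P\'olya--Carlson philosophy discussed in Remark~\ref{rem:BMW vs us} and carried out in \cite{BGNS22_AG}: you are essentially exhibiting a natural-boundary phenomenon for $H$, which points toward the stronger boundary statement, whereas the paper's product-formula argument is shorter, avoids any delicate limit interchange near the circle of convergence, and says nothing about boundary behavior.
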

	
	\begin{remark}
	We allow the possibility that any (or even both) of $M$ and $N$ to be $0$. When $N=0$, the condition in (a) is vacuously true and $\zeta_A(z)$ is algebraic in this case. When $N=0$ and $M=0$ meaning that none of the eigenvalues of $A$ has absolute value $1$, the product
	$\displaystyle\prod_{1\leq j\leq M}$ in (a) is the empty product and $\zeta_A(z)=\displaystyle\frac{1}{1-r(A)z}$. When $M=0$ and $N>0$, the condition in (b) is vacuously true and $\zeta_A(z)$ is transcendental in this case.
	\end{remark}
	
	Our results are quite different from results in work of Baake-Lau-Paskunas \cite{BLP10_AN}. In \cite{BLP10_AN}, the authors prove that the zeta function of endomorphisms of the classical tori $\T^d$ are always rational. In our setting, we have cases when the zeta function is rational, transcendental, or algebraic irrational:
	
	\begin{example}\label{eg:algebraic irrational}
	Let $F=\GF(7)$ and let $A$ be the diagonal matrix with diagonal entries $\alpha,\beta\in \GF(7)^*$ where $\alpha$ has order $2$ and $\beta$ has order $3$. Then
	$$\zeta_A(z)=\frac{(1-z^2)^{1/2}(1-z^3)^{1/3}}{(1-z)(1-z^6)^{1/6}}$$
	is algebraic irrational.
	\end{example}

	In work of Bell-Miles-Ward \cite{BMW14_TA}, the authors conjecture and obtain some partial results concerning the following P\'olya-Carlson type dichotomy \cite{Car21_U,Pol28_U} for a \emph{slightly different} zeta function: it is either rational or admits a natural boundary at its radius of convergence.
	\begin{conjecture}[Bell-Miles-Ward, 2014]\label{conj:BMW}
	Let $\theta:\ X\rightarrow X$ be an automorphism of  a compact metric abelian group with the property that $\tilde{N}_{k}(\theta)<\infty$ for every $k\geq 1$ where $\tilde{N}_k(\theta)$ denotes the number of fixed points of $\theta^k$. Then 
	$$\tilde{\zeta}_{\theta}(z):=\exp\left(\sum_{k=1}^{\infty} \frac{\tilde{N}_k(\theta)}{k} z^k\right)$$ 
	is either a rational function or admits a natural boundary.
	\end{conjecture}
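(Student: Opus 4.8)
\textbf{Proof strategy for Conjecture~\ref{conj:BMW}.} The plan is to dualize and then run a d\'evissage over the Laurent polynomial ring $R:=\Z[u^{\pm 1}]$. By Pontryagin duality $\theta$ corresponds to an automorphism $\hat\theta$ of the countable discrete abelian group $\widehat{X}$, which thereby becomes an $R$-module with $u$ acting as $\hat\theta$; and since $\widehat{\ker\phi}\cong\widehat{X}/\mathrm{im}(\widehat{\phi})$ for any continuous endomorphism $\phi$ of $X$, taking $\phi=\theta^{k}-\mathrm{id}$ gives $\widehat{\mathrm{Fix}(\theta^{k})}\cong\widehat{X}/(u^{k}-1)\widehat{X}$, so that $\tilde{N}_{k}(\theta)=\card\bigl(\widehat{X}/(u^{k}-1)\widehat{X}\bigr)$. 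Thus the hypothesis $\tilde{N}_{k}(\theta)<\infty$ for all $k$ says precisely that each $u^{k}-1$ has finite cokernel on $\widehat{X}$. The first step is to show $\tilde{\zeta}_{\theta}$ is multiplicative under short exact sequences of $R$-modules on which this finiteness holds (via the snake lemma for multiplication by $u^{k}-1$, using that the hypothesis forces $u^{k}-1$ to be a non-zero-divisor on every subquotient that is an infinite domain), so that when $\widehat{X}$ is finitely generated over the Noetherian ring $R$ the problem reduces, through a composition series, to the case $\widehat{X}=R/\fq$ for a single prime ideal $\fq\subset R$ — both the class of rational functions and the class of power series having a natural boundary on their circle of convergence being stable under finite products.

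Next I would dispose of the primes by type. If $R/\fq$ is finite, then $u$ acts as a root of unity, $(\tilde{N}_{k})$ is eventually periodic, and $\tilde{\zeta}_{R/\fq}$ is rational. If $\fq\cap\Z=(0)$ — the case $\fq=(0)$ being excluded by the finiteness hypothesis — then $S:=R/\fq$ is a one-dimensional domain whose fraction field $\mathbb{K}$ is a number field and in which the image $\xi$ of $u$ is a unit; up to bounded conductor factors $\tilde{N}_{k}=\card\bigl(S/(\xi^{k}-1)S\bigr)$ is a product of local terms $\vert\xi^{k}-1\vert_{w}^{-1}$ over the finite places $w$ of $\mathbb{K}$, and by the product formula this is governed by the archimedean Mahler-measure quantities $\prod_{w\mid\infty}\max\{1,\vert\xi^{k}-1\vert_{w}\}$. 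The classical P\'olya--Carlson dichotomy then applies: if $\xi$ has no archimedean absolute value equal to $1$ except at roots of unity, $\tilde{\zeta}_{S}$ is rational, and otherwise its circle of convergence is a natural boundary — this is exactly the mechanism of \cite{BMW14_TA}, into which the analytic input of \cite{Car21_U,Pol28_U} feeds.

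The remaining, and hardest, case is $\fq\cap\Z=(p)$ with $\dim R/\fq=1$: then $S:=R/\fq$ is a one-dimensional domain of characteristic $p$ whose fraction field is a global function field, $\xi$ (the image of $u$) is a unit, and again $\tilde{N}_{k}$ is, up to bounded factors, a product of local terms $\vert\xi^{k}-1\vert_{w}^{-1}$ over the finite places $w$. This is the single-module shadow of the situation treated in Theorem~\ref{thm:zeta}: the places with $\vert\xi\vert_{w}>1$ assemble into a factor $(1-rz)^{-1}$ for a suitable $r>1$, the places with $\vert\xi\vert_{w}=1$ at which $\xi$ reduces to a root of unity of order $m_{w}$ contribute rational factors built from least common multiples of the $m_{w}$, and the places with $\vert\xi\vert_{w}=1$ at which $\xi$ does not reduce to a root of unity are the obstruction to algebraicity. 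The argument behind Theorem~\ref{thm:zeta}(b) already shows that in the latter case $\sum_{k}\tilde{N}_{k}z^{k}$ is not D-finite, hence $\tilde{\zeta}_{S}$ is transcendental; to reach Conjecture~\ref{conj:BMW} one must upgrade this to the presence of a natural boundary, which I would attempt from the explicit product shape of $\tilde{\zeta}_{S}$ on its disk of convergence — an infinite product of factors $\bigl(1-(rz)^{e}\bigr)^{\pm c/e}$ with $e$ ranging over an infinite set of integers of unbounded least common multiple, whose zeros and branch points (roots of unity scaled by $1/r$) are dense on the circle $\vert z\vert=1/r$ — by a clustering-of-singularities argument showing that on no arc can these infinitely many factors cancel every singularity.

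I expect the real work to sit in two places. First, that density-of-surviving-singularities estimate: controlling the exponents and multiplicities of the infinitely many overlapping branch factors so that a dense set of singularities on $\vert z\vert=1/r$ genuinely persists is the crux of passing from \emph{transcendental} to \emph{natural boundary}, and is exactly the point at which the present paper (which only needs non-D-finiteness) stops short. Second, removing finite generation: for a general compact metric abelian group $\widehat{X}$ is countable but need not be finitely generated over $R$, so one must realize it as a directed union of finitely generated $R$-submodules, show the fixed-point counts stabilize appropriately, and prove that the resulting (possibly infinite) limit of rational-or-natural-boundary functions still obeys the dichotomy — with particular care for cancellation between the residue-characteristic-$0$ and residue-characteristic-$p$ contributions inside a single module.
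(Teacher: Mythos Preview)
The statement you are attempting to prove is presented in the paper as a \emph{conjecture} (Conjecture~\ref{conj:BMW}), not as a theorem; the paper contains no proof of it. Remark~\ref{rem:BMW vs us} explicitly says that the authors ``can only prove this in some special cases and leave it for future work,'' and the note added in May 2022 records that even the natural-boundary assertion in the particular case $M=0$, $N>0$ of Theorem~\ref{thm:zeta} was only established later in \cite{BGNS22_AG}. So there is no paper proof to compare against; what you have written is a strategy toward an open problem.

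As a strategy, the Pontryagin-duality/$R$-module d\'evissage you outline is the standard and correct framework, and your identification of the two hard points (upgrading transcendence to a natural boundary in the characteristic-$p$ prime case, and passing beyond finitely generated modules) is accurate. But there is a genuine gap already at the d\'evissage step: the assertion that ``the class of power series having a natural boundary on their circle of convergence [is] stable under finite products'' is false in general --- $f$ and $1/f$ can each have a natural boundary while their product is $1$ --- and even among zeta functions of $R/\fq$ for varying primes $\fq$ the radii of convergence need not coincide, so it is not clear on which circle the product would have its boundary. The multiplicativity of $\tilde{\zeta}$ over a composition series is fine, but the dichotomy ``rational or natural boundary'' is not obviously inherited by a product of factors each satisfying it; one needs an argument specific to these zeta functions (positivity of the $\tilde N_k$, or a direct analysis of the total product) rather than a formal stability claim. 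Your sketch of the ``clustering-of-singularities'' argument is likewise only heuristic: showing that branch points from infinitely many factors cannot all cancel on any arc is exactly the missing analytic input, and you correctly flag it as such. In short, this is a reasonable roadmap, but it is not a proof, and the paper makes no claim to have one.
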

	
	\begin{remark}\label{rem:BMW vs us}
	The difference between $\tilde{\zeta}_{\theta}$ in \ref{conj:BMW} and the Artin-Mazur zeta function $\zeta_f$ is that the latter involves the number of isolate fixed points. Example~\ref{eg:algebraic irrational} is not included in Conjecture~\ref{conj:BMW} since $A^6$ is the identity matrix and hence $\tilde{N}_6(A)=\infty$
	while we have $N_6(A)=0$ (see Lemma~\ref{lem:N_k(A) formula}). When $A\in M_d(\Z_F)$ has the property that none of its eigenvalues is a root of unity, 
one can show that $N_k(A)=\tilde{N}_k(A)$ and hence $\zeta_A(z)=\tilde{\zeta}_A(z)$.
Conjecture~\ref{conj:BMW} predicts that when $M=0$ and $N>0$ in Theorem~\ref{thm:zeta}, the zeta function $\zeta_A(z)=\tilde{\zeta}_A(z)$ admits the circle of radius $1/r(A)$ as a natural boundary. We can only prove this in some special cases and leave it for future work.	
	\end{remark}

	For the proof of Theorem~\ref{thm:zeta}, we first derive a formula for $N_k(A)$ and it turns out that one needs to study $\vert\lambda^k-1\vert$ where $\lambda$ is an eigenvalue of $A$. When $\vert\lambda\vert\neq 1$, one immediately has
	$\vert\lambda^k-1\vert=\max\{1,\vert\lambda\vert\}^k$. However, when $\vert\lambda\vert=1$ (i.e.
	$\lambda$ is among the $\mu_i$'s and $\eta_j$'s), a more refined analysis is necessary to study
	$\vert\lambda^k-1\vert$. After that, part (a) can be proved by a direct computation. 
	On the other hand, the proof of part (b) is more intricate. We first assume that the series $\displaystyle\sum_{k=1}^{\infty}N_k(A)z^k$ is D-finite, then use a certain linear recurrence relation satisfied by D-finite power series to contradict the peculiar value of $N_k(A)$ at certain $k$.

	\textbf{Acknowledgements.}
	The first author is partially supported by a Vanier Canada Graduate Scholarship. The second and third authors are partially supported by an NSERC Discovery Grant and a CRC Research Stipend. We are grateful to Professors Jason Bell, Michael Singer, and Tom Ward for useful comments that help improve the paper.
	 
	\textbf{Notes added in May 2022.} This paper is superseded by \cite{BGNS22_AG} by Bell and the authors and no longer intended for publication. Inspired by the earlier work \cite{BNZ20_DF,BNZ22_DF2}, the paper \cite{BGNS22_AG} establishes a general
	P\'olya-Carlson criterion and applies this to confirm that the zeta function
	$\zeta_A(z)$ admits the circle of radius $1/r(A)$ as a natural boundary in the transcendence case (see Remark~\ref{rem:BMW vs us}).

	\section{Normed vector spaces and linear maps}
	Throughout this section, let $K$ be a field that is complete with respect to a nontrivial absolute value $\vert\cdot\vert$; nontriviality means that there exists $x\in K^{*}$ such that $\vert x\vert\neq 1$. We have:
	\begin{proposition}\label{prop:extending absolute value}
	Let $E/K$ be a finite extension of degree $n$. Then $\vert\cdot\vert$ can be extended in a unique way
	to an absolute value on $E$ and this extension is given by the formula:
	$$\vert \alpha\vert=\vert \Norm_{E/K}(\alpha)\vert^{1/n}\ \text{for every $\alpha\in E$.}$$		
	The field $E$ is complete with respect to this extended absolute value.
	\end{proposition}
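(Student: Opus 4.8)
This is classical --- it appears as \cite[Chapter~II, Theorem~4.8]{Neu99_AN} --- and the plan is to reproduce the standard three-part argument: construct an extension via the displayed formula, deduce uniqueness from the equivalence of norms on a finite-dimensional vector space, and read off completeness along the way. For existence I would set $\Vert\alpha\Vert:=\vert\Norm_{E/K}(\alpha)\vert^{1/n}$ for $\alpha\in E$. Multiplicativity of $\Norm_{E/K}$, together with $\Norm_{E/K}(\alpha)=0\iff\alpha=0$, makes $\Vert\cdot\Vert$ multiplicative with $\Vert\alpha\Vert=0\iff\alpha=0$; and $\Norm_{E/K}(a)=a^{n}$ for $a\in K$ gives $\Vert a\Vert=\vert a\vert$. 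The sole non-formal point is the triangle inequality, which in the non-archimedean case (the one relevant to this paper) is equivalent to $\cO:=\{\alpha\in E:\ \Vert\alpha\Vert\le 1\}$ being a subring of $E$.

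I would establish this by showing that $\cO$ is exactly the integral closure $B$ of $\cO_{K}$ in $E$, which is automatically a ring. Writing $g(X)=X^{m}+c_{m-1}X^{m-1}+\cdots+c_{0}$ for the minimal polynomial of a nonzero $\alpha\in E$ over $K$ and $e=[E:K(\alpha)]$, one has $\Norm_{E/K}(\alpha)=((-1)^{m}c_{0})^{e}$, so $\Vert\alpha\Vert\le 1$ if and only if $c_{0}\in\cO_{K}$. On the one hand, if $\alpha\in B$ then its minimal polynomial over $K$ has all coefficients in $\cO_{K}$, since $\cO_{K}$, being a valuation ring, is integrally closed; in particular $c_{0}\in\cO_{K}$, so $\alpha\in\cO$. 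On the other hand, if $\alpha\in\cO$ then $c_{0}\in\cO_{K}$, and the key lemma below forces $g\in\cO_{K}[X]$, so $\alpha$ is integral over $\cO_{K}$, i.e.\ $\alpha\in B$. Hence $\cO=B$ and the ultrametric inequality holds. (The archimedean case reduces by Ostrowski's theorem to $K\in\{\R,\C\}$, where $E$ is $\R$ or $\C$ and the formula returns the ordinary modulus, e.g.\ $\vert\Norm_{\C/\R}(z)\vert^{1/2}=\vert z\bar z\vert^{1/2}=\vert z\vert$.)

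The one genuinely analytic ingredient, and the step I expect to require the most care, is the key lemma: \emph{a monic irreducible $g(X)=X^{m}+c_{m-1}X^{m-1}+\cdots+c_{0}\in K[X]$ with $c_{0}\in\cO_{K}$ has all its coefficients in $\cO_{K}$.} I would argue by contradiction. Suppose $M:=\max_{0\le i\le m}\vert c_{i}\vert>1$, where $c_{m}:=1$, and pick $j$ with $\vert c_{j}\vert=M$; then $1\le j\le m-1$ because $\vert c_{0}\vert\le 1<M$ and $\vert c_{m}\vert=1<M$. The polynomial $f:=c_{j}^{-1}g$ lies in $\cO_{K}[X]$, is primitive (its coefficient at $X^{j}$ is $1$), remains irreducible over $K$, and has constant and leading coefficients in $\fp_{K}$. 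Let $r$ be the largest index with $c_{j}^{-1}c_{r}\notin\fp_{K}$, so $1\le r\le m-1$; then the reduction $\bar f\in(\cO_{K}/\fp_{K})[X]$ has the form $X^{t}\bar w(X)$ with $1\le t\le r$ and $\bar w(0)\ne 0$. Since $X^{t}$ is monic and coprime to $\bar w$, Hensel's lemma \cite[pp.~129--131]{Neu99_AN} lifts this factorization to $f=GH$ in $\cO_{K}[X]$ with $\deg G=t\ge 1$, hence $\deg H=m-t\ge 1$, contradicting the irreducibility of $f$ over $K$. The only delicate point is arranging, via this choice of $r$, that the split-off factor $X^{t}$ is monic so that Hensel's lemma applies directly; with that observation in hand the bookkeeping is routine.

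Finally, uniqueness and completeness. Fix a $K$-basis of $E$ and let $\Vert\cdot\Vert_{\infty}$ be the associated maximum-coordinate norm; it identifies $(E,\Vert\cdot\Vert_{\infty})$ with $K^{n}$, which is complete because $K$ is. Any absolute value $\Vert\cdot\Vert$ on $E$ restricting to $\vert\cdot\vert$ on $K$ is in particular a norm on the finite-dimensional $K$-vector space $E$, hence equivalent to $\Vert\cdot\Vert_{\infty}$ by the standard fact that all norms on a finite-dimensional vector space over a complete field are equivalent (cf.\ \cite[Chapter~II]{Neu99_AN}); therefore $E$ is complete for $\Vert\cdot\Vert$, which proves the last assertion. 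If $\Vert\cdot\Vert_{1}$ and $\Vert\cdot\Vert_{2}$ both extend $\vert\cdot\vert$, each is equivalent to $\Vert\cdot\Vert_{\infty}$ and so they induce the same topology on $E$, whence $\Vert\cdot\Vert_{1}=\Vert\cdot\Vert_{2}^{s}$ for some $s>0$; evaluating at any $x\in K^{*}$ with $\vert x\vert\ne 1$ (which exists since $\vert\cdot\vert$ is nontrivial) forces $s=1$. Thus the extension is unique, and it must coincide with the one produced by the norm formula.
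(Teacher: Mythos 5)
Your argument is correct and is exactly the one the paper is citing: the paper's proof is a reference to Neukirch \cite[pp.~131--132]{Neu99_AN}, and you have faithfully reproduced that proof, including the reduction of the ultrametric inequality to showing $\{\Vert\cdot\Vert\le 1\}$ equals the integral closure of $\cO_K$, the Hensel-based lemma that a monic irreducible polynomial with integral constant term is integral, and the uniqueness/completeness deduction from equivalence of norms on a finite-dimensional space over a complete field.
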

	\begin{proof}
	See \cite[pp.~131--132]{Neu99_AN}.
	\end{proof}
	We now fix an algebraic closure of $K$ and extend $\vert\cdot\vert$ to an absolute value on this algebraic closure thanks to Proposition~\ref{prop:extending absolute value}. For a vector space $V$ over $K$, a norm on $V$ is a function $\Vert\cdot\Vert:\ V\rightarrow\R_{\geq 0}$ such that:
	\begin{itemize}
		\item $\Vert x\Vert=0$ iff $x=0$.
		\item $\Vert cx\Vert=\vert c\vert\cdot \Vert x\Vert$ for every $c\in K$ and $v\in V$.
		\item $\Vert x+y\Vert\leq \Vert x\Vert+\Vert y\Vert$ for every $x,y\in V$.	
	\end{itemize}
	Two norms $\Vert\cdot\Vert$ and $\Vert\cdot\Vert'$ on $V$ are said to be equivalent if there exists a positive constant $C$ such that 
	$$\frac{1}{C} \Vert x\Vert \leq \Vert x\Vert'\leq C\Vert x\Vert$$ 
	for every $x\in V$. It is well-known that any two norms on a finite dimensional vector space $V$
	are equivalent to each other and $V$ is complete with respect to any norm, see \cite[pp.~132--133]{Neu99_AN}.
	
	\begin{proposition}\label{prop:1 Jordan block}
	Let $V$ be a vector space over $K$ of finite dimension $d>0$. Let $\ell:\ V\rightarrow V$ be an 
	invertible $K$-linear map such that there exist $\lambda\in K^*$ and 
	a basis $x_1,\ldots,x_d$ of $V$ over $K$ with:
	$$\ell(x_1)=\lambda x_1\ \text{and}\ \ell(x_i)=\lambda x_i+x_{i-1}\ \text{for $2\leq i\leq d$};$$ 
	in other words, the matrix of $\ell$ with respect to $x_1,\ldots,x_d$ is one single Jordan block
	with eigenvalue $\lambda$. Let $\delta>0$. Then there exists a norm
	$\Vert\cdot\Vert$ on $V$ such that:
	\begin{equation}\label{eq:1 Jordan block}
	(1-\delta)\vert\lambda\vert\cdot \Vert x\Vert\leq \Vert\ell(x)\Vert\leq (1+\delta)\vert\lambda\vert\cdot\Vert x\Vert
	\end{equation}
	for every $x\in V$.		
	\end{proposition}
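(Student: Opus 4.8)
The plan is to rescale the given Jordan basis so that the off-diagonal $1$'s become small, and then use the sup norm with respect to the rescaled basis. Concretely, since $K$ carries a nontrivial absolute value, pick $c\in K^*$ with $0<|c|<1$ and set $y_i := c^{\,i} x_i$ (or $y_i := c^{-i}x_i$; I will fix the sign so the estimate comes out right). In the new basis we get $\ell(y_1)=\lambda y_1$ and $\ell(y_i)=\lambda y_i + c\, y_{i-1}$ for $2\le i\le d$. Thus the matrix of $\ell$ in the basis $y_1,\dots,y_d$ is $\lambda I + cE$ where $E$ is the nilpotent shift. Now define $\Vert\cdot\Vert$ to be the $K$-norm for which $y_1,\dots,y_d$ is an orthonormal-type basis: $\Vert\sum a_i y_i\Vert := \max_i |a_i|$. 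One checks immediately that this is a norm in the sense defined above.

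The key estimates then follow from the ultrametric (or just ordinary triangle) inequality. Write $x=\sum a_i y_i$, so $\Vert x\Vert = \max_i|a_i|$. Then $\ell(x) = \sum_i a_i(\lambda y_i + c y_{i-1}) = \sum_i (\lambda a_i + c a_{i+1}) y_i$ (with $a_{d+1}=0$), hence
$$\Vert \ell(x)\Vert = \max_i |\lambda a_i + c a_{i+1}| \le \max_i\bigl(|\lambda|\,|a_i| + |c|\,|a_{i+1}|\bigr) \le (|\lambda| + |c|)\Vert x\Vert.$$
For the lower bound, if $i_0$ is an index achieving $|a_{i_0}| = \Vert x\Vert$, then $|\lambda a_{i_0} + c a_{i_0+1}| \ge |\lambda|\,|a_{i_0}| - |c|\,|a_{i_0+1}| \ge (|\lambda| - |c|)\Vert x\Vert$, so $\Vert\ell(x)\Vert \ge (|\lambda|-|c|)\Vert x\Vert$. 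Therefore it suffices to choose $c\in K^*$ with $|c| \le \delta|\lambda|$, which is possible because $\lambda\ne 0$, $\delta>0$, and the value group of $|\cdot|$ on $K$ is nontrivial hence contains arbitrarily small positive elements (take a suitable power of the element $x$ with $|x|\ne 1$, inverting if necessary). With such a $c$, inequality~\eqref{eq:1 Jordan block} holds for all $x\in V$.

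There is essentially no hard obstacle here; the only point requiring a word of care is the existence of an element of $K$ with prescribed small absolute value, which is exactly where the standing hypothesis that $|\cdot|$ is nontrivial on $K$ enters. (If one preferred not to rescale inside $K$, one could instead scale by a real parameter and build a norm directly, but rescaling within $K^*$ is cleanest and keeps everything $K$-linear.) I would also remark that, should $V$ fail to be a single Jordan block, this proposition is the building block: the general case follows by taking direct sums and combining the norms, though that is the subject of a later proposition rather than this one.
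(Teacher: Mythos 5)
Your proof is correct, and it takes a genuinely different route from the paper's. The paper proceeds by induction on $d$: it applies the inductive hypothesis to $V'=\Span(x_1,\dots,x_{d-1})$ to get a norm $\Vert\cdot\Vert'$ there, picks a \emph{real} parameter $M>0$ large enough that $\delta|\lambda|M\geq\Vert x_{d-1}\Vert'$, and defines $\Vert a x_d+x'\Vert=|a|M+\Vert x'\Vert'$; the estimates then follow from the triangle inequality and the choice of $M$. You instead rescale the Jordan basis inside $K$ by $y_i=c^i x_i$ with $c\in K^*$ of small absolute value, so that the matrix becomes $\lambda I+cE$, and then take the sup norm in the new basis. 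Both arguments work for any complete valued field (archimedean or not), since both only use the ordinary triangle inequality. The trade-off is that your construction is direct and non-inductive, and the resulting norm is the sup norm in a rescaled Jordan basis, which is conceptually transparent; but it requires producing an element of $K^*$ of arbitrarily small absolute value, which is where nontriviality of $|\cdot|$ enters. The paper's construction sidesteps that by using a real scaling parameter $M$ rather than an element of $K$, at the cost of an induction and a less explicit norm (a weighted $\ell^1$-type combination). One minor remark: you may assume $\delta<1$ without loss of generality, since for $\delta\geq1$ the left-hand inequality in \eqref{eq:1 Jordan block} is vacuous; with that reduction, $|c|\leq\delta|\lambda|$ guarantees $|\lambda|-|c|>0$ so your lower bound is meaningful.
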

	\begin{proof}
	We proceed by induction on $d$. The case $d=1$ is obvious since we can take $\Vert\cdot\Vert$ to be 
	any norm and we have $\Vert \ell(x_1)\Vert=\vert\lambda\vert\Vert x_1\Vert$. Let $d\geq 2$ and 
	suppose 
	the proposition holds for any vector space of dimension at most $d-1$. Let $V'=\Span(x_1,\ldots,x_{d-1})$. By the induction hypothesis, there exists a norm
	$\Vert\cdot\Vert'$ on $V'$ such that
	\begin{equation}\label{eq:1 Jordan block ell and norm on V'}
	(1-\delta)\vert\lambda\vert \cdot \Vert x'\Vert'\leq \Vert \ell(x')\Vert' \leq (1+\delta)\vert \lambda\vert \cdot \Vert x'\Vert'
	\end{equation}
	for every $x'\in V'$.

	Let $M$ be a positive number such that:
	\begin{equation}\label{eq:1 Jordan block condition on M}
	\delta \vert \lambda\vert M\geq \Vert x_{d-1}\Vert'.
	\end{equation}
	Every $x\in V$ can be written uniquely as $x=ax_d+x'$ where $a\in K$ and $x'\in V'$, then we define the norm $\Vert\cdot\Vert$ on $V$ by the formula:
	$$\Vert x\Vert=\vert a\vert M+\Vert x'\Vert'.$$
	Note that $\ell(x)=a\lambda x_d+ax_{d-1}+\ell(x')$ and 
	$\Vert \ell(x)\Vert=\vert \lambda\vert\vert a\vert M+\Vert \ell(x')+ax_{d-1}\Vert'$. Therefore:
	\begin{align*}
	\Vert \ell(x)\Vert &\geq \vert\lambda\vert\vert a\vert M+\Vert \ell(x')\Vert'-\vert a\vert\cdot \Vert x_{d-1}\Vert'\\
	&\geq (1-\delta)\vert\lambda\vert \vert a\vert M+(1-\delta)\vert\lambda\vert\cdot\Vert x'\Vert'=(1-\delta)\vert\lambda\vert\cdot\Vert x\Vert	
	\end{align*}
	where the last inequality follows from \eqref{eq:1 Jordan block ell and norm on V'} and \eqref{eq:1 Jordan block condition on M}.
	The desired upper bound on $\Vert\ell(x)\Vert$ is obtained in a similar way:
	\begin{align*}
	\Vert \ell(x)\Vert &\leq \vert\lambda\vert\vert a\vert M+\Vert \ell(x')\Vert'+\vert a\vert\cdot\Vert x_{d-1}\Vert'\\
	&\leq (1+\delta)\vert\lambda\vert \vert a\vert M+(1+\delta)\vert\lambda\vert\cdot\Vert x'\Vert'=(1+\delta)\vert\lambda\vert\cdot\Vert x\Vert	
	\end{align*}
	and we finish the proof.
	\end{proof}

	\begin{proposition}\label{prop:prime power char pol}
	Let $V$ be a vector space over $K$ of finite dimension $d>0$. 	
	Let $\ell:\ V\rightarrow V$ be an 
	invertible $K$-linear map such that 
	the characteristic polynomial $P(X)$ of $\ell$ is the power of an irreducible polynomial in $K[X]$.
    By Proposition~\ref{prop:extending absolute value},  all the roots
    of $P$ have the same absolute value denoted by $\theta$.
	Let $\delta>0$. Then there exists a norm $\Vert\cdot\Vert$ on $V$ such that
	$$(1-\delta)\theta \Vert x\Vert\leq \Vert \ell(x)\Vert\leq (1+\delta)\theta\Vert x\Vert$$
	for every $x\in V$.	
	\end{proposition}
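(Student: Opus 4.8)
The plan is to reduce Proposition~\ref{prop:prime power char pol} to Proposition~\ref{prop:1 Jordan block} by passing to a finite extension of $K$, decomposing $V$ into generalized eigenspaces, and then assembling the norms. First I would let $E$ be a splitting field of $P(X)$ over $K$, which is a finite extension; by Proposition~\ref{prop:extending absolute value} the absolute value $\vert\cdot\vert$ extends uniquely to $E$, and since $P$ is a power of a single irreducible polynomial, the Galois conjugates of any root are permuted transitively, so all roots of $P$ indeed have one common absolute value $\theta$. Consider $V_E:=V\otimes_K E$ and the $E$-linear extension $\ell_E$ of $\ell$. Over $E$ the map $\ell_E$ has a Jordan decomposition $V_E=\bigoplus_j W_j$ where each $W_j$ is $\ell_E$-invariant and the restriction $\ell_E|_{W_j}$ is a single Jordan block with eigenvalue a root $\lambda_j$ of $P$, so $\vert\lambda_j\vert=\theta$ for every $j$.

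Next, applying Proposition~\ref{prop:1 Jordan block} to each block, for the given $\delta>0$ I obtain a norm $\Vert\cdot\Vert_j$ on each $W_j$ satisfying
\begin{equation*}
(1-\delta)\theta\Vert w\Vert_j\leq \Vert \ell_E(w)\Vert_j\leq (1+\delta)\theta\Vert w\Vert_j
\end{equation*}
for all $w\in W_j$. Define a norm on $V_E$ by $\Vert\sum_j w_j\Vert_E:=\sum_j \Vert w_j\Vert_j$ (or the max; either works). Since the block decomposition is $\ell_E$-invariant, the two-sided bound passes to $\Vert\cdot\Vert_E$ on all of $V_E$ verbatim. The remaining issue is to descend from $V_E$ back to $V$: the inclusion $V\hookrightarrow V_E$ (via $x\mapsto x\otimes 1$) lets me restrict $\Vert\cdot\Vert_E$ to $V$, giving a function on $V$ that is clearly a norm (the three axioms are inherited, noting $\vert\cdot\vert$ on $K$ is the restriction of $\vert\cdot\vert$ on $E$), and since $\ell_E$ restricts to $\ell$ on $V$, the inequality $(1-\delta)\theta\Vert x\Vert\leq\Vert\ell(x)\Vert\leq(1+\delta)\theta\Vert x\Vert$ holds for all $x\in V$. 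This is the desired norm.

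I should be slightly careful about one point, which is the main thing to check rather than a deep obstacle: the definition of $V_E$ as a vector space over $E$ versus over $K$, and making sure the restricted norm on $V$ is genuinely $K$-valued-homogeneous — but this is immediate since for $c\in K\subseteq E$ and $x\in V$ we have $\Vert cx\Vert_E=\vert c\vert\Vert x\Vert_E$ by the $E$-homogeneity of $\Vert\cdot\Vert_E$. One could alternatively avoid tensoring altogether: work inside a fixed algebraic closure, take the $K[X]$-module structure on $V$ given by $\ell$, and use that $V\otimes_K E$ is free to lift a Jordan basis; but the cleanest write-up is the extension-of-scalars argument above. The genuinely substantive input is Proposition~\ref{prop:1 Jordan block}, already proved; everything here is bookkeeping, so I expect no real obstacle, only the need to state the scalar-extension step precisely.
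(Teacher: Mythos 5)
Your proof is correct and follows essentially the same route as the paper: pass to the splitting field $E$ of $P$, extend scalars to $V_E=E\otimes_K V$, decompose into Jordan blocks, apply Proposition~\ref{prop:1 Jordan block} to each block, sum the resulting norms, and restrict to $V$. The paper's own proof is exactly this, stated slightly more tersely; the extra care you devote to checking that the restricted norm is $K$-homogeneous and that the bounds descend is a sound but routine verification.
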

	\begin{proof}
	Let $E$ be the splitting field of $P(X)$ over $K$. 
	Let $V_E=E\otimes_K V$ and we still use $\ell$ to denote the induced linear operator on $V_E$.
	In the Jordan canonical form of $\ell$, let $s$ denote the number of Jordan blocks. 
	Then we have a basis $x_{1,1},\ldots,x_{1,d_1},\ldots,x_{s,1},\ldots,x_{s,d_s}$ of
	$V_E$ over $E$ such that for each $1\leq i\leq s$, the map $\ell$
	maps $V_{E,i}:=\Span_E(x_{i,1},\ldots,x_{i,d_i})$ to itself and 
	the matrix representation of $\ell$ with respect to $x_{i,1},\ldots,x_{i,d_i}$ is
	the $i$-th Jordan block. By Proposition~\ref{prop:1 Jordan block}, there exists a norm $\Vert\cdot\Vert_i$ on $V_{E,i}$ such that
	$$(1-\delta)\theta\Vert x\Vert_i\leq\Vert\ell(x)\Vert_i\leq (1+\delta)\theta\Vert x\Vert_i$$
	for every $x\in V_{E,i}$. We can now define $\Vert\cdot\Vert$ on $V_E= V_{E,1}\oplus\cdots\oplus V_{E,s}$ as $\Vert\cdot\Vert_1+\cdots+\Vert\cdot\Vert_s$. Then the restriction of $\Vert\cdot\Vert$ on
	$V$ is the desired norm.
	\end{proof}
	
	\begin{corollary}\label{cor:general ell}
	Let $V$ be a vector space over $K$ of finite dimension $d>0$. 	
	Let $\ell:\ V\rightarrow V$ be an 
	invertible $K$-linear map. Then there exist a positive integer $s$, 
	subspaces
	$V_1,\ldots,V_s$ of $V$, and positive numbers $\theta_1,\ldots,\theta_s$ with the
	following properties:
	\begin{itemize}
		\item [(i)] $\ell(V_i)\subseteq V_i$ for $1\leq i\leq s$ and $V=V_1\oplus\cdots\oplus V_s$.
		
		\item [(ii)] The multiset 
		$$\{\vert \lambda\vert:\ \text{eigenvalues $\lambda$ of $V$ counted with multiplicities}\}$$
		of order $d$ is equal to the multiset
		$$\{\theta_1,\ldots,\theta_1,\theta_2,\ldots,\theta_2,\ldots,\theta_s,\ldots,\theta_s\}$$
		in which the number of times $\theta_i$ appears is $\dim(V_i)$ for $1\leq i\leq s$.

		\item [(iii)] For every $\delta>0$, for $1\leq i\leq s$, there exists a norm $\Vert\cdot\Vert_i$ 
		on
		$V_i$ such that 
		$$(1-\delta)\theta_i\Vert x\Vert_i\leq \Vert\ell(x)\Vert_i\leq (1+\delta)\theta_i\Vert x\Vert_i$$
		for every $x\in V_i$.
	\end{itemize}
	\end{corollary}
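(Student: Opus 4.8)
The plan is to reduce to Proposition~\ref{prop:prime power char pol} by means of the primary decomposition of $V$ with respect to $\ell$. First I would write the characteristic polynomial of $\ell$ as $P(X)=\prod_{i=1}^{s}P_i(X)^{e_i}$ with $P_1,\ldots,P_s\in K[X]$ the distinct monic irreducible factors and $e_i\geq 1$; since $\ell$ is invertible, $P(0)\neq 0$ and hence $P_i(0)\neq 0$ for every $i$. Setting $V_i:=\ker\bigl(P_i(\ell)^{e_i}\bigr)$ and applying the primary decomposition theorem (viewing $V$ as a $K[X]$-module with $X$ acting as $\ell$) yields $V=V_1\oplus\cdots\oplus V_s$ with each $V_i$ stable under $\ell$, with the characteristic polynomial of $\ell|_{V_i}$ equal to $P_i(X)^{e_i}$, and therefore $\dim(V_i)=e_i\deg(P_i)$; moreover $\ell|_{V_i}$ is invertible because $P_i(0)\neq 0$. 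This establishes (i).

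Next I would pin down the numbers $\theta_i$. For any root $\alpha$ of $P_i$, Proposition~\ref{prop:extending absolute value} gives $\vert\alpha\vert=\vert\Norm_{K(\alpha)/K}(\alpha)\vert^{1/\deg(P_i)}=\vert P_i(0)\vert^{1/\deg(P_i)}$, a quantity depending only on $i$; I set $\theta_i$ to be this positive number. Then every root of $P_i(X)^{e_i}$ has absolute value $\theta_i$, and $P_i(X)^{e_i}$ contributes exactly $e_i\deg(P_i)=\dim(V_i)$ roots of $P(X)$ counted with multiplicity. Since the eigenvalues of $\ell$ counted with multiplicity are exactly the roots of $P(X)$ counted with multiplicity, the multiset $\{\vert\lambda\vert\}$ coincides with the multiset in which each $\theta_i$ appears $\dim(V_i)$ times; this is (ii).

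For (iii), fix $\delta>0$. For each $i$, the map $\ell|_{V_i}\colon V_i\to V_i$ is an invertible $K$-linear map whose characteristic polynomial $P_i(X)^{e_i}$ is a power of an irreducible polynomial with all roots of absolute value $\theta_i$, so Proposition~\ref{prop:prime power char pol} supplies a norm $\Vert\cdot\Vert_i$ on $V_i$ with $(1-\delta)\theta_i\Vert x\Vert_i\leq\Vert\ell(x)\Vert_i\leq(1+\delta)\theta_i\Vert x\Vert_i$ for all $x\in V_i$. This gives (iii) and finishes the argument.

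I do not anticipate a genuine obstacle: everything is formal once the primary decomposition is available. The one point that needs attention is verifying that $\ell$ restricts to an \emph{invertible} map on each $V_i$, since Proposition~\ref{prop:prime power char pol} is stated only for invertible operators; this is exactly what the hypothesis that $\ell$ is invertible (equivalently $P_i(0)\neq 0$ for all $i$) guarantees. Alternatively one could bypass the structure theorem and build the decomposition by induction, splitting off one $\ell$-invariant subspace at a time, but invoking primary decomposition is the cleanest route.
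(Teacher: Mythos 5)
Your argument is correct and follows essentially the same route as the paper: both invoke the primary decomposition of $V$ as a $K[X]$-module (the paper cites Dummit--Foote p.~424) to obtain $\ell$-invariant subspaces whose restricted characteristic polynomials are prime powers, and then apply Proposition~\ref{prop:prime power char pol} to each summand. Your version is slightly more explicit---you check that $\ell|_{V_i}$ is invertible via $P_i(0)\neq 0$ and you compute $\theta_i=\vert P_i(0)\vert^{1/\deg P_i}$ directly, both points the paper leaves implicit---but there is no substantive difference in approach.
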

	\begin{proof}
	By \cite[p.~424]{DF04_AA}, there exist $\ell$-invariant subspaces
	$V_1,\ldots,V_s$ of $V$ such that $V=V_1\oplus\cdots\oplus V_s$ and for $1\leq i\leq s$, the characteristic polynomial $P_i$ of the restriction of $\ell$ to $V_i$ is a power of an irreducible factor over $K$ of the characteristic polynomial of $\ell$. Let $\theta_i$ denote the common absolute value of the roots of $P_i$. Then we apply Proposition~\ref{prop:prime power char pol} and finish the proof.
	\end{proof}

	\section{The proof of Theorem~\ref{thm:entropy}}
	Recall from Section~\ref{sec:first} that
	$\pi:\ \R_F\rightarrow \T_F$ denotes the quotient map, 
	$$\fp:=\fp_{\R_F}=\displaystyle\frac{1}{t} F[[1/t]]=\left\{\sum_{i\leq -1}a_i t^i:\ a_i\in F\ \forall i\right\},$$  
	every element $\alpha\in \T_{F}$ has the unique preimage $\tilde{\alpha}\in \R_{F}$ of the form
	$$\tilde{\alpha}=\sum_{i\leq -1} a_it^i\in \fp,$$
	$\mu$ denotes the probability Haar measure on $\T_F$, and $\rho$ is the metric on 
	$\T_F$ given by
	$\rho(\alpha,\beta)=\vert \tilde{\alpha}-\tilde{\beta}\vert$.
	Let $\tilde{\mu}$ be the Haar measure on $\R_{F}$ normalized so that 
	$\tilde{\mu}(\D_{\bF})=1$. Therefore, we have that $\D_{F}$ and $\T_{F}$ are isometric as metric 
	spaces and isomorphic as probability spaces.	
	
	Let $d$ be a positive integer. On $\T_{F}^d$ and $\R_{F}^d$ we have the respective product 
	measures
	$\mu^d$ and $\tilde{\mu}^d$. Let $\vert\cdot\vert_{(d)}$ be the norm on $\R_{F}^d$ given by:
	$$\vert(x_1,\ldots,x_d)\vert_{(d)}=\max_{1\leq i\leq d} \vert x_i\vert.$$
	Then the induced metric $\rho_{(d)}$ on $\T_{F}^d$ is:
	$$\rho_{(d)}((\alpha_1,\ldots,\alpha_d),(\beta_1,\ldots,\beta_d))=\max_{1\leq i\leq d}\vert\tilde{\alpha}_i-\tilde{\beta}_i\vert.$$

	\begin{proposition}\label{prop:measure of balls}
	Let $V$ be a vector space over $\R_F$ of dimension $d$.
	Let $\Vert\cdot\Vert$ be a norm on $V$ and let $\eta$ be a Haar measure on $V$. There exist positive constants $C_1$ and $C_2$ such that the open ball
	$$B(r^{-}):=\{x\in V:\ \Vert x\Vert<r\}$$
	and the closed ball
	$$B(r):=\{x\in V:\ \Vert x\Vert\leq r\}$$
	satisfy
	$$C_1 r^d<\eta(B(r^{-})),\eta(B(r))<C_2r^d$$
	for every $r>0$.
	\end{proposition}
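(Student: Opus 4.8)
The plan is to reduce the statement to the one-dimensional case $V=\R_F$ equipped with its given absolute value, where the measure of a ball can be computed explicitly because $\vert\cdot\vert$ is discretely valued, with value group $q^{\Z}$.

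First I would fix a basis of $V$ over $\R_F$, identify $V$ with $\R_F^d$, and introduce the sup-norm $\Vert(x_1,\ldots,x_d)\Vert_{\infty}:=\max_i\vert x_i\vert$. Since any two norms on a finite-dimensional vector space over the complete field $\R_F$ are equivalent, there is a constant $C\geq 1$ with $C^{-1}\Vert x\Vert_{\infty}\leq\Vert x\Vert\leq C\Vert x\Vert_{\infty}$ for all $x\in V$; consequently the closed $\Vert\cdot\Vert_{\infty}$-ball of radius $r/C$ is contained in the closed $\Vert\cdot\Vert$-ball of radius $r$, which in turn is contained in the closed $\Vert\cdot\Vert_{\infty}$-ball of radius $Cr$, and likewise for open balls. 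By monotonicity of $\eta$ it therefore suffices to prove the two-sided bound for $\Vert\cdot\Vert_{\infty}$, at the expense of multiplying $C_1$ and $C_2$ by $C^{-d}$ and $C^{d}$. Moreover any two Haar measures on the additive group $V$ agree up to a positive scalar, so I may also replace $\eta$ by the product measure $\tilde{\mu}^d$ of the excerpt, again only rescaling the constants. Finally, a $\Vert\cdot\Vert_{\infty}$-ball of radius $r$ in $\R_F^d$ is the product of $d$ copies of the one-dimensional $\vert\cdot\vert$-ball of radius $r$ in $\R_F$, so its $\tilde{\mu}^d$-measure is the $d$-th power of the $\tilde{\mu}$-measure of the latter, and the problem collapses to $d=1$.

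For $d=1$, the key point is that for each $n\in\Z$ the set $\{x\in\R_F:\vert x\vert\leq q^{-n}\}$ equals $t^{-n}\cO$, and this has $\tilde{\mu}$-measure $q^{-n}\tilde{\mu}(\cO)$ (for $n\geq 0$ the subgroup $t^{-n}\cO$ has index $q^{n}$ in $\cO$, while for $n<0$ the subgroup $\cO$ has index $q^{-n}$ in $t^{-n}\cO$). Given $r>0$ I would pick $n\in\Z$ with $q^{-n}\leq r<q^{-n+1}$; since $\vert\cdot\vert$ takes values in $\{0\}\cup q^{\Z}$, the closed ball $\{\vert x\vert\leq r\}$ coincides with $t^{-n}\cO$, so its measure $q^{-n}\tilde{\mu}(\cO)$ lies strictly between $\tfrac1q\tilde{\mu}(\cO)\,r$ and $2\tilde{\mu}(\cO)\,r$ because $r/q<q^{-n}\leq r$. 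Running the same argument with the largest power of $q$ strictly below $r$ handles the open ball $\{\vert x\vert<r\}$, which equals $t^{-m}\cO$ for a suitable $m\in\Z$ with $r/q^{2}<q^{-m}<r$, and hence has measure strictly between $\tfrac1{q^{2}}\tilde{\mu}(\cO)\,r$ and $\tilde{\mu}(\cO)\,r$. Tracing these one-dimensional constants back through the reductions of the previous paragraph yields the proposition.

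I do not expect a genuine obstacle: the whole argument is driven by the single structural fact that $\R_F$ is discretely valued, which forces the measure of a ball, as a function of its radius, to be a step function comparable to $r$. The only point requiring mild care is to keep all inequalities strict through the three reductions, which is handled by padding the constants as above.
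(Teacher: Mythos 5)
Your proposal is correct and follows essentially the same route as the paper: identify $V$ with $\R_F^d$, use equivalence of norms to sandwich $\Vert\cdot\Vert$-balls between sup-norm balls, and exploit discreteness of the valuation to see that the measure of a sup-norm ball is (up to a bounded factor) a power of $q$ comparable to $r^d$. Your extra reduction to the one-dimensional case via the product structure is a harmless reorganization; the paper instead normalizes the Haar measure on $\R_F^d$ and estimates the $d$-dimensional sup-norm ball measures directly.
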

	\begin{proof}
	After choosing a basis, we may identify $V$ as $\R_F^d$; recall the norm $\vert\cdot\vert_{(d)}$ above. 
	By uniqueness up to scaling of Haar measures, we may assume that $\eta$ is the Haar measure normalized so that the set
	$$B':=\{(x_1,\ldots,x_d)\in\R_F^d:\ \vert(x_1,\ldots,x_d)\vert_{(d)}=\max_{1\leq i\leq d}\vert x_i\vert \leq 1\}$$
	has $\eta(B')=1$.
	
	Since $\Vert\cdot\Vert$ and $\vert\cdot\vert_{(d)}$ are equivalent to each other, there exist positive $C_3$ and $C_4$ such that both $B(r^{-})$ and $B(r)$ contain 
	$$B'(C_3r):=\{(x_1,\ldots,x_d)\in\R_F^d:\ \vert(x_1,\ldots,x_d)\vert_{(d)}=\max_{1\leq i\leq d} \vert x_i\vert \leq C_3r\}$$
	and are contained in
	$$B'(C_4r)=\{(x_1,\ldots,x_d)\in \R_F^d:\ \vert(x_1,\ldots,x_d)\vert_{(d)}=\max_{1\leq i\leq d} \vert x_i\vert \leq C_4r\}.$$

	Let $q^m$ (respectively $q^n$) be the largest (respectively smallest) power of $q$
	that is smaller than $C_3r$ (respectively larger than $C_4r$). Then we have:
	$$\eta(B'(C_3r))\geq q^{md}> (C_3r/q)^d\ \text{and}$$
	$$\eta(B'(C_4r))\leq q^{nd}<(C_4qr)^d.$$
	This finishes the proof. 
	\end{proof}

	We apply Corollary~\ref{cor:general ell} for the vector space $\R_F^d$ and the multiplication-by-$A$ map to get the invariant subspaces $V_1,\ldots,V_s$ and 
	positive numbers $\theta_1,\ldots,\theta_s$. 
	Fix a Haar measure $\eta_i$ on $V_i$ and let $\eta:=\eta_1\times\cdots\times\eta_s$ which is a Haar measure on $\R_F^d$. Let $c>0$ such that $\tilde{\mu}^d=c\eta$.

	Fix $\delta>0$, we assume that $\delta$ is sufficiently small so that 
	$(1+\delta)\theta_i<1$ whenever $\theta_i<1$. For $1\leq i\leq s$, let $\Vert\cdot\Vert_i$ be a norm on $V_i$ as given in Corollary~\ref{cor:general ell}.
	Every $x\in\R_F^d$ can be written uniquely as $x=x_1+\ldots+x_s$ with $x_i\in V_i$ for $1\leq i\leq s$ and we define the norm $\Vert\cdot\Vert$ on $\R_F^d$ by the formula:
	$$\Vert x\Vert=\max_{1\leq i\leq s}\Vert x_i\Vert_i.$$
	Since $\vert\cdot\vert_{(d)}$ and $\Vert\cdot\Vert$ are equivalent to each other, the induced metric 
	$\tau$ on $\T_F^d$ given by:
	$$\tau((\alpha_1,\ldots,\alpha_d),(\beta_1,\ldots,\beta_d)):=\Vert (\tilde{\alpha}_1-\tilde{\beta_1},\ldots,\tilde{\alpha}_d-\tilde{\beta}_d)\Vert$$
	is equivalent to $\rho_{(d)}$. 
	
	\begin{lemma}\label{lem:replacing torus metric by vector space norm}
	We still use $\pi$ to denote the quotient map
	$\R_F^d\rightarrow\T_F^d$. There exists a positive constant $C_5$ such that the following hold.
	\begin{itemize}
		\item [(i)] For any $x\in \fp^d$ and $y\in \R_F^d$, if $\Vert x-y\Vert\leq C_5$ then
		$y\in \fp^d$.
		
		\item [(ii)] For any $x,y\in\R_F^d$ such that $\Vert x-y\Vert\leq C_5$
	and $\tau(\pi(Ax),\pi(Ay))\leq C_5$, we have 
	$\tau(\pi(Ax),\pi(Ay))=\Vert Ax-Ay\Vert$.
	\end{itemize}
	
	\end{lemma}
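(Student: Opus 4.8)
The plan is to exploit that $\fp^d$ is a compact open \emph{subgroup} of $\R_F^d$: on a small enough neighbourhood of the origin the quotient map $\pi$ is essentially invisible, so the torus metric $\tau$ agrees with the vector-space norm $\Vert\cdot\Vert$ there. The one point to keep in mind is that multiplication by $A$ does not preserve $\fp^d$ (already $t\cdot(1/t)=1\notin\fp$), so $C_5$ has to be chosen \emph{after} $A$, small enough that $A$ still sends the relevant $\Vert\cdot\Vert$-ball back into $\fp^d$.

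First I would fix a constant $C>0$ with $\vert z\vert_{(d)}\le C\Vert z\Vert$ for all $z\in\R_F^d$ (available because $\vert\cdot\vert_{(d)}$ and $\Vert\cdot\Vert$ are equivalent) and a constant $c_A>0$ with $\Vert Az\Vert\le c_A\Vert z\Vert$ for all $z$ (the $\R_F$-linear operator $z\mapsto Az$ on the finite dimensional space $\R_F^d$ is bounded). Then I would take any $C_5>0$ with $CC_5<1$ and $c_ACC_5<1$, and recall $\fp^d=\{z\in\R_F^d:\vert z\vert_{(d)}<1\}$. Part (i) is then immediate: if $x\in\fp^d$ and $\Vert x-y\Vert\le C_5$, then $\vert y-x\vert_{(d)}\le C\Vert y-x\Vert\le CC_5<1$, so $y-x\in\fp^d$ and hence $y=x+(y-x)\in\fp^d$.

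For part (ii), set $u:=Ax-Ay=A(x-y)$. The hypothesis $\Vert x-y\Vert\le C_5$ yields $\vert u\vert_{(d)}\le C\Vert u\Vert\le Cc_A\Vert x-y\Vert\le c_ACC_5<1$, so $u\in\fp^d$. Now I unwind the definition of $\tau$ coordinatewise: for each $i$, the $i$-th coordinate of the vector whose $\Vert\cdot\Vert$-norm computes $\tau(\pi(Ax),\pi(Ay))$ is the difference of the distinguished $\fp$-preimages of $\pi((Ax)_i)$ and $\pi((Ay)_i)$; this difference lies in $\fp$ and reduces to $\pi(u_i)$ modulo $\Z_F$, so it is the distinguished $\fp$-preimage of $\pi(u_i)$, and since $u_i\in\fp$ it equals $u_i$. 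Hence $\tau(\pi(Ax),\pi(Ay))=\Vert(u_1,\dots,u_d)\Vert=\Vert u\Vert=\Vert Ax-Ay\Vert$. (The second hypothesis $\tau(\pi(Ax),\pi(Ay))\le C_5$ turns out not to be needed, but it does no harm.)

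I do not expect a genuine obstacle: the argument is bookkeeping in non-archimedean geometry. The one thing that really needs care is that $C_5$ must be chosen depending on $A$ — small relative to the operator norm of $A$ — precisely because the failure of $A$ to preserve $\fp^d$ is what could otherwise make $\tau$ and $\Vert\cdot\Vert$ disagree after applying $A$. I would also be sure to record the elementary coordinatewise identity $\widetilde{\alpha-\beta}=\tilde\alpha-\tilde\beta$ (for $\alpha,\beta\in\T_F$), on which the coincidence of $\tau$ and $\Vert\cdot\Vert$ on small sets rests.
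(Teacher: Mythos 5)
Your proof is correct, and for part (ii) it takes a genuinely different route from the paper's. The paper writes $\tau(\pi(Ax),\pi(Ay))=\Vert Ax-Ay+z\Vert$ for some $z\in\Z_F^d$, notes that every non-zero $z\in\Z_F^d$ satisfies $\Vert z\Vert\ge C_6$ for a fixed $C_6>0$, and then rules out $z\ne 0$ by a triangle-inequality argument that \emph{uses} the second hypothesis $\tau(\pi(Ax),\pi(Ay))\le C_5$ together with the operator bound $\Vert Aw\Vert\le C_7\Vert w\Vert$, choosing $C_5<C_6/(C_7+1)$. You instead shrink $C_5$ so that $Ax-Ay$ itself already lands in $\fp^d$, and then exploit that $\fp$ is an additive subgroup of $\R_F$ with $\fp\cap\Z_F=\{0\}$: the distinguished lift of $\pi(Ax)-\pi(Ay)$ is in $\fp^d$, is congruent to $Ax-Ay$ mod $\Z_F^d$, and therefore \emph{equals} $Ax-Ay$ — so the correction term is zero by structure, not by an estimate. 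A consequence, which you correctly point out, is that the second hypothesis in (ii) is superfluous in your argument, whereas the paper's proof uses it. Both approaches are sound; yours is more structural (leaning on the non-archimedean fact that small balls around $0$ are subgroups), while the paper's is closer in spirit to the classical archimedean torus argument, where one cannot simply absorb $Ax-Ay$ into the fundamental domain and must instead estimate away the lattice correction.

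One small bookkeeping remark: you should make explicit that $C_5$ must simultaneously satisfy $CC_5<1$ (for part (i)) and $c_ACC_5<1$ (for part (ii)); you do state both, so this is just a matter of presentation. Also worth recording, as you note at the end, is the identity $\widetilde{\alpha}-\widetilde{\beta}=\widetilde{\alpha-\beta}$ whenever the left side lies in $\fp$ — that is exactly the group-theoretic fact your argument turns on.
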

	\begin{proof}
	For part (i), we can characterize the set $\fp^d$ as the set of $x\in\R_F^d$ such that
	$\vert x\vert_{(d)}\leq 1/q$. Hence when $\Vert x-y\Vert$ is sufficiently small, we have that 
	$\vert x-y\vert_{(d)}\leq 1/q$ thanks to equivalence of these norms. Hence $x-y\in \fp^d$ and we
	have $y\in\fp^d$.

	We now consider part (ii). Since $\vert z\vert_{(d)}\geq 1$ for every non-zero $z\in\Z_F^d$ and since $\Vert\cdot\Vert$ and $\vert\cdot\vert_{(d)}$ are equivalent, there is a positive constant  $C_6$ such that
	$\Vert z\Vert\geq C_6$ for every non-zero $z\in \Z_F^d$.
	
	There exists $C_7$ such that 
	$\Vert Aw\Vert \leq C_7\Vert w\Vert$ for every $w\in \R_F^d$; for instance we may take 
	$C_7=(1+\delta)\max_{1\leq i\leq s} \theta_i$ thanks to the definition
	of $\Vert\cdot\Vert$ and properties
	of the $\Vert\cdot\Vert_i$'s in Corollary~\ref{cor:general ell}. 
	
	We now choose $C_5$ to be any positive constant such that $C_5<\frac{C_6}{C_7+1}$. Let $x,y\in \R_F^d$ satisfying conditions in the statement of the lemma. 
	We have
	$$C_5\geq \tau(\pi(Ax),\pi(Ay))=\Vert Ax-Ay+z\Vert$$
	for some $z\in\Z_F^d$. If $z\neq 0$ then we have 
	$$C_7C_5\geq C_7\Vert x-y\Vert\geq \Vert Ax-Ay\Vert\geq \Vert z\Vert - \Vert Ax-Ay+z\Vert\geq C_6-C_5,$$
	contradicting the choice of $C_5$. Hence $z=0$ and we are done.
	\end{proof}
	
	\begin{proof}[Proof of Theorem~\ref{thm:entropy}]
	 Let $\alpha=(\alpha_1,\ldots,\alpha_d)\in \T_F^d$ and let $x=(\tilde{\alpha}_1,\ldots,\tilde{\alpha}_d)$ which is the preimage of $\alpha$ in $\fp^d$. Let 
	$\epsilon>0$ and $n\geq 1$. 
	All the implicit constants below might depend on the choice of the norms $\Vert\cdot\Vert_i$'s hence depending on $\delta$ but they are independent of $\epsilon$ and $n$.
	
	Let 
	$$B(\alpha,\epsilon,n):=\{\beta=(\beta_1,\ldots,\beta_d)\in \T_F^d:\ \rho_{(d)}(A^j\alpha,A^j\beta)<\epsilon\ \text{for  $j=0,1,\ldots,n-1$}\}.$$
	We aim to obtain an upper bound on $\mu^d(B(\alpha,\epsilon,n))$. Thanks to equivalence between $\rho_{(d)}$ and $\tau$, there exists a positive constant $C_8$ such that $B(\alpha,\epsilon,n)$ is contained in
	$$B'(\alpha,\epsilon,n):=\{\beta=(\beta_1,\ldots,\beta_d)\in \T_F^d:\ \tau(A^j\alpha,A^j\beta)<C_8\epsilon\ \text{for  $j=0,1,\ldots,n-1$}\}.$$
	
	For $\beta=(\beta_1,\ldots,\beta_d)\in B'(\alpha,\epsilon,n)$, let $y=(\tilde{\beta}_1,\ldots,\tilde{\beta}_d)$ and we have
	$\Vert x-y\Vert=\tau(\alpha,\beta)<C_8\epsilon$. When $\epsilon$ is sufficiently small so that 
	$C_8\epsilon$ is smaller than the constant $C_5$ in
	Lemma~\ref{lem:replacing torus metric by vector space norm}, we can apply this lemma repeatedly 
	to get 
	$$B'(\alpha,\epsilon,n)=\{\pi(y):\ y\in \fp^d\ \text{and}\ \Vert A^jx-A^jy\Vert<C_8\epsilon\ \text{for $j=0,1,\ldots,n-1$}\}.$$
	 
	 By Lemma~\ref{lem:replacing torus metric by vector space norm}, the condition
	 $y\in\fp^d$ is automatic once we have $\Vert x-y\Vert<C_8\epsilon<C_5$ and $x\in\fp^d$.
	 Let 
	 $$\tilde{B}'(x,\epsilon,n):=\{y\in \R_F^d:\ \Vert A^jx-A^jy\Vert<C_8\epsilon\ \text{for $j=0,1,\ldots,n-1$}\},$$ 
	 we have 
	 $\mu^d(B'(\alpha,\epsilon,n))=\tilde{\mu}^d(\tilde{B}'(x,\epsilon,n))=c\eta(\tilde{B}'(x,\epsilon,n)).$
	
	We express $x=x_1+\ldots+x_s$ and $y=y_1+\ldots+y_s$ where each $x_i,y_i\in V_i$.
	The condition in the description of $\tilde{B}'(x,\epsilon,n)$ is equivalent to
	$\Vert x_i-y_i\Vert_i<C_8\epsilon$ and 
	$\Vert A^jx_i-A^jy_i\Vert_i<C_8\epsilon$ for every $1\leq i\leq s$ and $1\leq j\leq n-1$.	
	We use Corollary~\ref{cor:general ell} to have:
	\begin{equation}\label{eq:use Cor general ell for A^j}
	((1-\delta)\theta_i)^j \Vert x_i-y_i\Vert_i \leq \Vert A^jx_i-A^jy_i\Vert_i\leq ((1+\delta)\theta_i)^j\Vert x_i-y_i\Vert_i.
	\end{equation}
	Let $I=\{i\in\{1,\ldots,s\}:\ \theta_i\geq 1\}$ and since we choose $\delta$ sufficiently small
	so that $(1+\delta)\theta_i<1$ whenever $\theta_i<1$, inequality \eqref{eq:use Cor general ell for A^j} implies that the set $\tilde{B}'(x,\epsilon,n)$
	is contained in the set:
	
	\begin{align*}
	\{y=y_1+\ldots+y_s:\ & \Vert x_i-y_i\Vert_i<C_8\epsilon ((1-\delta)\theta_i)^{-(n-1)}\ \text{for $i\in I$}\\
	&\text{and}\ \Vert x_i-y_i\Vert_i<C_8\epsilon\ \text{for $i\notin I$}\}.
	\end{align*}
	
	Let $d_i=\dim(V_i)$ for $1\leq i\leq s$. By Proposition~\ref{prop:measure of balls}, there exists a constant $C_9$ such that:
	\begin{equation}\label{eq:C9}
	\mu^d(B'(\alpha,\epsilon,n))=c\eta(\tilde{B}'(x,\epsilon,n))<C_9 \prod_{i\in I}(C_8\epsilon)^{d_i}((1-\delta)\theta_i)^{-d_i(n-1)}.
	\end{equation}
	Put $h^{+}(\mu^d,A,x,\epsilon)= \displaystyle\limsup_{n\to\infty} \frac{-\log(\mu^d(B(\alpha,\epsilon,n)))}{n}$, then \eqref{eq:C9} implies:
	$$\sum_{i\in I}d_i\log(1-\delta)+\sum_{i\in I}d_i\log\theta_i\leq h^{+}(\mu,A,x,\epsilon).$$
	Recall that our only assumption on $\epsilon$ is that it is sufficiently small so that $C_8\epsilon<C_5$.

	For the other inequality, we argue in a similar way. There exists a constant $C_{10}$ such that set $B(\alpha,\epsilon,n)$ contains the set:
	$$B''(\alpha,\epsilon,n):=\{\beta=(\beta_1,\ldots,\beta_d)\in \T_F^d:\ \tau(A^j\alpha,A^j\beta)<C_{10}\epsilon\ \text{for  $0\leq j\leq n-1$}\}.$$
    And when $\epsilon$ is sufficiently small so that $C_{10}\epsilon<C_5$, we apply Lemma~\ref{lem:replacing torus metric by vector space norm} repeatedly to get
    $$B''(\alpha,\epsilon,n)=\{\pi(y):\ y\in \fp^d\ \text{and}\ \Vert A^jx-A^jy\Vert<C_{10}\epsilon\ \text{for $j=0,1,\ldots,n-1$}\}.$$
    Then consider
    $$\tilde{B}''(x,\epsilon,n):=\{y\in \R_F^d:\ \Vert A^jx-A^jy\Vert<C_{10}\epsilon\ \text{for $j=0,1,\ldots,n-1$}\},$$ 
	we have 
	$\mu^d(B''(\alpha,\epsilon,n))=\tilde{\mu}^d(\tilde{B}''(x,\epsilon,n))=c\eta(\tilde{B}''(x,\epsilon,n)).$	Arguing as before, the set 
	$\tilde{B}''(x,\epsilon,n)$
	contains the set:
	\begin{align*}
	\{y=y_1+\ldots+y_s:\ & \Vert x_i-y_i\Vert_i<C_{10}\epsilon ((1+\delta)\theta_i)^{-(n-1)}\ \text{for $i\in I$}\\
	&\text{and}\ \Vert x_i-y_i\Vert_i<C_{10}\epsilon\ \text{for $i\notin I$}\}.
	\end{align*}
	Then we can use Proposition~\ref{prop:measure of balls} to get a constant $C_{11}$ such that:
	$$C_{11}\prod_{i\in I}(C_{10}\epsilon)^{d_i}((1+\delta)\theta_i)^{-d_i(n-1)}<\eta(\tilde{B}''(x,\epsilon,n)).$$
	This implies
	$$h^{+}(\mu,A,x,\epsilon)\leq \sum_{i\in I}d_i\log(1+\delta)+\sum_{i\in I}d_i\log\theta_i$$
	when $\epsilon$ is sufficiently small.
	
	Therefore
	$$\sum_{i\in I}d_i\log(1-\delta)+\sum_{i\in I}d_i\log\theta_i\leq \lim_{\epsilon\to 0^{+}} h^{+}(\mu,A,x,\epsilon)\leq \sum_{i\in I}d_i\log(1+\delta)+\sum_{i\in I}d_i\log\theta_i.$$
	Since $\delta$ can be arbitrarily small, we conclude that
	$$\lim_{\epsilon\to 0^{+}} h^{+}(\mu,A,x,\epsilon)=\sum_{i\in I}d_i\log\theta_i=\sum_{i=1}^d\log\max\{\vert\lambda_i\vert,1\}$$
	where the last equality follows from Property~(ii) in Corollary~\ref{cor:general ell}. By the Brin-Katok theorem (see \cite{BK83_OL} and \cite[pp.~262--263]{VO16_FO}), we have:
	$$h(\mu^d,A)=\sum_{i=1}^d\log\max\{\vert\lambda_i\vert,1\}.$$
	It is well-known that $h(A)=h(\mu^d,A)$ \cite[p.~197]{Wal82_AI} and this finishes the proof.
	\end{proof}

	\section{The proof of Theorem~\ref{thm:zeta}}
	Throughout this section, we assume the notation in the statement of Theorem~\ref{thm:zeta}. 
	Let $I$ denote the identity matrix in $M_d(\Z_F)$. The below formula for $N_k(A)$ in the classical case is well-known \cite{BLP10_AN}:
	\begin{lemma}\label{lem:N_k(A) formula}
	Let $B\in M_d(\Z_F)$. The number of isolated fixed points $N_1(B)$ of the multiplication-by-$B$ map
	$$B:\ \T_F^d\rightarrow \T_F^d$$
	is $\vert \det(B-I)\vert$. Consequently $N_k(A)=\vert\det(A^k-I)\vert$ for every $k\geq 1$.
	\end{lemma}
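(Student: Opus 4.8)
The plan is to identify the fixed-point set of the multiplication-by-$B$ map with the kernel of the endomorphism $C:=B-I$ of $\T_F^d$, and then treat separately the cases $\det(C)\neq 0$ and $\det(C)=0$. An element $\alpha\in\T_F^d$ is fixed by $B$ precisely when $C\alpha=0$ in $\T_F^d$, so $\mathrm{Fix}(B)=\ker\bigl(C\colon\T_F^d\to\T_F^d\bigr)$; being the preimage of $0$ under a continuous homomorphism, it is a closed, hence compact, subgroup of $\T_F^d$.

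When $\det(C)\neq 0$, the matrix $C$ is invertible over the field $\R_F$, so $C^{-1}\in M_d(\R_F)$ and $m\mapsto\pi(C^{-1}m)$ defines a homomorphism $\Z_F^d\to\mathrm{Fix}(B)$. It is surjective (lift a fixed point to $\tilde\alpha\in\R_F^d$; then $m:=C\tilde\alpha\in\Z_F^d$ and $\tilde\alpha=C^{-1}m$) with kernel $C\Z_F^d$ (as $C^{-1}m\in\Z_F^d\iff m\in C\Z_F^d$), so $\mathrm{Fix}(B)\cong\Z_F^d/C\Z_F^d$. Since $\Z_F=F[t]$ is a PID, the Smith normal form $C=UDV$ with $U,V\in\GL_d(F[t])$ and $D=\mathrm{diag}(f_1,\dots,f_d)$ gives $\Z_F^d/C\Z_F^d\cong\bigoplus_{i=1}^d F[t]/(f_i)$, a finite group of order $q^{\sum_i\deg f_i}=q^{\deg\det C}$; and directly from the definition of $\vert\cdot\vert$, $\vert\det C\vert=q^{\deg\det C}$ because $\det C$ is a nonzero polynomial. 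Thus $\mathrm{Fix}(B)$ is a finite subset of the metric space $\T_F^d$ of cardinality exactly $\vert\det(B-I)\vert$, and every point of a finite subset of a Hausdorff space is an isolated fixed point, so $N_1(B)=\vert\det(B-I)\vert$.

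When $\det(C)=0$ we have $\vert\det(B-I)\vert=0$, so it suffices to show $\mathrm{Fix}(B)$ has no isolated points. The subspace $W:=\ker\bigl(C\colon\R_F^d\to\R_F^d\bigr)$ is nonzero and $\pi(W)\subseteq\mathrm{Fix}(B)$. Picking $0\neq w\in W$, the vectors $t^{-N}w$ lie in $\fp^d$ once $N$ is large (multiplication by $1/t$ scales $\vert\cdot\vert_{(d)}$ by $1/q$) and are pairwise distinct; since $\pi$ restricts to a bijection $\fp^d\to\T_F^d$, their images are infinitely many distinct points of $\pi(W)$, so $\mathrm{Fix}(B)$ is infinite. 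But a topological group possessing an isolated point is discrete, and a compact discrete group is finite; as $\mathrm{Fix}(B)$ is a compact infinite subgroup of $\T_F^d$, it has no isolated point, and $N_1(B)=0=\vert\det(B-I)\vert$.

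Finally, $A^k\in M_d(\Z_F)$ and the $k$-th iterate of multiplication-by-$A$ is multiplication-by-$A^k$, so applying the first part with $B=A^k$ gives $N_k(A)=N_1(A^k)=\vert\det(A^k-I)\vert$ for every $k\geq 1$ (in particular each $N_k(A)$ is finite). The only step that is not purely formal is the degenerate case $\det(B-I)=0$: one must rule out isolated fixed points, and I expect the cleanest argument to be the homogeneity-plus-compactness observation above rather than an explicit description of $\mathrm{Fix}(B)$.
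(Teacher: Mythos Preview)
Your proof is correct and follows essentially the same approach as the paper: in the nondegenerate case you set up the bijection $\mathrm{Fix}(B)\cong\Z_F^d/(B-I)\Z_F^d$ and count via the Smith normal form, exactly as the paper does. The only minor difference is in the degenerate case: the paper simply observes that for any fixed point $y$ and any $x\in\ker(B-I)$, the translates $y+cx$ with $c\in\R_F$ small are nearby fixed points, whereas you reach the same conclusion via the general principle that an infinite compact group has no isolated points---both arguments are short and standard.
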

	\begin{proof}
	When $\det(B-I)=0$, there is a non-zero $x\in\R_F^d$ such that $Bx=x$. Then for any 
	fixed point $y\in \T_F^d$, the points $y+cx$ for $c\in \R_F$ are fixed. By choosing $c$ to be in an arbitrarily small neighborhood of $0$, we have that $y$ is not isolated. Hence $N_1(B)=0$.
	
	Suppose $\det(B-I)\neq 0$. There is a 1-1 correspondence between the set of fixed points of 
	$B$ and  the set $\Z_F^d/(B-I)\Z_F^d$. Since $\Z_F$ is a PID, we obtain the Smith Normal Form of $B-I$ that is a diagonal matrix with entries $b_1,\ldots,b_d\in\Z_F\setminus\{0\}$
	and a $\Z_F$-basis $x_1,\ldots,x_d$ of $\Z_F^d$ so that $b_1x_1,\ldots,b_dx_d$
	is a $\Z_F$-basis of $(B-I)\Z_F$. Therefore the number of fixed points of $B$ is:
	$$\prod_{i=1}^d \card (\Z_F/b_i\Z_F)=\prod_{i=1}^d\vert b_i\vert=\vert \det(B-I)\vert.$$	
	\end{proof}
	
	We fix once and for all a finite extension $K$ of $\R_F$ containing all the eigenvalues of $A$ and let $\delta$ be the inertia degree of $K/\R_F$. 
	For each $\mu_i$ in the  (possibly empty) 
	multiset $\{\mu_1,\ldots,\mu_M\}$ of eigenvalues of $A$ that are roots of unity, we 
	have the decomposition:
	$$\mu_{i}=\mu_{i,(0)}+\mu_{i,(1)}$$
	with $\mu_{i,(0)}\in\GF(q^{\delta})^*$ and $\mu_{i,(1)}\in \fp_K$ as in \eqref{eq:alpha0 and alpha1}; in fact $\mu_{i,(1)}=0$ since $\mu_i$ is a root of unity. Likewise, for each $\eta_{i}$ in 
	the (possibly empty) multiset 
	$\{\eta_1,\ldots,\eta_N\}$, we have:
	$$\eta_i=\eta_{i,(0)}+\eta_{i,(1)}$$
	with $\eta_{i,(0)}\in\GF(q^\delta)^*$ and $\eta_{i,(1)}\in \fp_K\setminus\{0\}$.
	\begin{proposition}\label{prop:main formulas}
	Let $v_p$ denote the $p$-adic valuation on $\Z$. Recall that the orders of
	$\mu_{i,(0)}$ and $\eta_{j,(0)}$ in $\GF(q^{\delta})^*$ are respectively denoted $m_i$ and $n_j$
	for $1\leq i\leq M$ and $1\leq j\leq N$; each of the $m_i$'s and $n_j$'s is coprime to $p$. Let $k$ be a positive integer, we have:
	\begin{itemize}
		\item [(i)] For $1\leq i\leq M$, $\vert\mu_i^k-1\vert=
\left\{
	\begin{array}{ll}
		0  & \mbox{if } k\equiv 0\bmod m_i \\
		1 & \mbox{otherwise}
	\end{array}
\right.$.

	\item [(ii)] For $1\leq j\leq N$, $\vert \eta_j^k-1\vert=\left\{
	\begin{array}{ll}
		\vert \eta_{j,(1)}\vert^{p^{v_p(k)}}  & \mbox{if } k\equiv 0\bmod n_j \\
		1 & \mbox{otherwise}
	\end{array}
\right.$
	
	\item [(iii)] $\displaystyle N_k(A)=\vert \det(A^k-I)\vert=r(A)^k\left(\prod_{i=1}^M a_{i,k}\prod_{j=1}^N b_{j,k}\right)^{p^{v_p(k)}}$ where
	$$a_{i,k}=\left\{
	\begin{array}{ll}
		0  & \mbox{if } k\equiv 0\bmod m_i \\
		1 & \mbox{otherwise}
	\end{array}
\right.\ \text{and}\ b_{j,k}=\left\{
	\begin{array}{ll}
		\vert \eta_{j,(1)}\vert  & \mbox{if } k\equiv 0\bmod n_j \\
		1 & \mbox{otherwise}
	\end{array}
\right.$$
	for $1\leq i\leq M$ and $1\leq j\leq N$.
	\end{itemize}
	\end{proposition}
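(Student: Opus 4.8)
The plan is to push everything down to a computation with the absolute value on $K$, the crucial input being the characteristic-$p$ identity $(1+u)^{p}=1+u^{p}$. First I would record the elementary observation that the map $\cO_K\to\GF(q^{\delta})$, $\alpha\mapsto\alpha_{(0)}$, is the reduction homomorphism modulo $\fp_K$: it is a ring homomorphism with kernel $\fp_K$, and since a nonzero Teichm\"uller representative is a root of unity and hence has absolute value $1$, one has $\vert\alpha\vert=1$ precisely when $\alpha_{(0)}\neq0$ and $\vert\alpha\vert<1$ precisely when $\alpha_{(0)}=0$, for every $\alpha\in\cO_K$.

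For part (i) I would note that, since the roots of unity in $K$ together with $0$ form exactly $\GF(q^{\delta})$, each $\mu_i$ lies in $\GF(q^{\delta})^{*}$; hence $\mu_{i,(1)}=0$ and $m_i$ is literally the multiplicative order of $\mu_i$. Thus $\mu_i^{k}-1=0$ when $m_i\mid k$, while for $m_i\nmid k$ the element $\mu_i^{k}-1$ is a nonzero element of the field $\GF(q^{\delta})$, so $\vert\mu_i^{k}-1\vert=1$. For part (ii) I would write $\eta_j=\eta_{j,(0)}(1+u_j)$ with $u_j:=\eta_{j,(1)}/\eta_{j,(0)}$, so that $u_j\in\fp_K\setminus\{0\}$ and $\vert u_j\vert=\vert\eta_{j,(1)}\vert$ (using $\vert\eta_{j,(0)}\vert=1$, and $u_j\neq0$ precisely because $\eta_j$ is not a root of unity). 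If $n_j\nmid k$, then $(\eta_j^{k})_{(0)}=\eta_{j,(0)}^{k}\neq1$, so $(\eta_j^{k}-1)_{(0)}\neq0$ and $\vert\eta_j^{k}-1\vert=1$. If $n_j\mid k$, then $\eta_{j,(0)}^{k}=1$, so $\eta_j^{k}=(1+u_j)^{k}$; writing $k=p^{a}m$ with $a=v_p(k)$ and $p\nmid m$, the Frobenius identity gives $(1+u_j)^{p^{a}}=1+u_j^{p^{a}}$, whence
$$\eta_j^{k}-1=(1+u_j^{p^{a}})^{m}-1=\sum_{i=1}^{m}\binom{m}{i}u_j^{ip^{a}}.$$
The $i=1$ term is $m\,u_j^{p^{a}}$, of absolute value $\vert u_j\vert^{p^{a}}$ since $p\nmid m$, while every term with $i\geq2$ has absolute value at most $\vert u_j\vert^{2p^{a}}<\vert u_j\vert^{p^{a}}$; by the ultrametric inequality the whole sum has absolute value $\vert u_j\vert^{p^{a}}=\vert\eta_{j,(1)}\vert^{p^{v_p(k)}}$, which is (ii).

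For part (iii), Lemma~\ref{lem:N_k(A) formula} gives $N_k(A)=\vert\det(A^{k}-I)\vert=\prod_{\lambda}\vert\lambda^{k}-1\vert$, the product over the $d$ eigenvalues of $A$ counted with multiplicity, using $\det(A^{k}-I)=\prod_{\lambda}(\lambda^{k}-1)$. I would split this product into three groups: the eigenvalues with $\vert\lambda\vert\neq1$, for which $\vert\lambda^{k}-1\vert=\max\{1,\vert\lambda\vert\}^{k}$ as already noted; the $\mu_i$, handled by (i); and the $\eta_j$, handled by (ii). Since $\prod_{\vert\lambda\vert\neq1}\max\{1,\vert\lambda\vert\}^{k}=r(A)^{k}$ and $a_{i,k}=a_{i,k}^{p^{v_p(k)}}$ because $a_{i,k}\in\{0,1\}$, assembling the factors gives exactly the claimed formula.

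I expect the only genuine obstacle to be the case $n_j\mid k$ in part (ii): one must combine the characteristic-$p$ ``freshman's dream'' with a careful term-by-term estimate of the binomial expansion and the ultrametric inequality, so as to pin down the exact value $\vert\eta_{j,(1)}\vert^{p^{v_p(k)}}$ rather than a mere upper bound; everything else is bookkeeping on top of the setup already in place.
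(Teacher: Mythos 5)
Your proposal is correct and follows essentially the same route as the paper: reduction mod $\fp_K$ for (i), an ultrametric estimate of the binomial expansion combined with the Frobenius identity for (ii), and assembly via $\det(A^k-I)=\prod_\lambda(\lambda^k-1)$ for (iii). The only cosmetic difference is the order of operations in (ii) — you apply the freshman's dream first to reduce to the exponent $m$ coprime to $p$ and then expand binomially, while the paper expands first for an exponent coprime to $p$ and only then raises to the $p^{v_p(k)}$-th power; both are the same argument.
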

	\begin{proof}
	Part (i) is easy: $\mu_i^k-1=\mu_{i,(0)}^k-1$ is an element of $\GF(q^\delta)$ and it is $0$ exactly when $k\equiv 0\bmod m_i$. For part (ii), when $k\not\equiv 0\bmod n_j$, we have:
	$$\eta_j^k-1\equiv \eta_{j,(0)}^k-1\not\equiv 0\bmod \fp_K,$$
	hence $\vert \eta_j^k-1\vert=1$. Now suppose $k\equiv 0\bmod n_j$ but $k\not\equiv 0\bmod p$, we have:
	$$\eta_j^k-1=(\eta_{j,(0)}+\eta_{j,(1)})^k-1=k\eta_{j,(0)}^{k-1}\eta_{j,(1)}+\sum_{\ell=2}^k \binom{k}{\ell}\eta_{j,(0)}^{k-\ell}\eta_{j,(1)}^{\ell}$$
	and since $\vert k\eta_{j,(0)}^{k-1}\eta_{j,(1)}\vert=\vert \eta_{j,(1)}\vert$ is strictly larger than the absolute value of each of the remaining terms, we have:
	$$\vert \eta_j^k-1\vert=\vert\eta_{j,(1)}\vert.$$
	Finally, suppose $k\equiv 0\bmod n_j$. Since $\gcd(n_j,p)=1$, we can write $k=k_0p^{v_p(k)}$ where
	$k_0\equiv 0\bmod n_j$ and $k_0\not\equiv 0\bmod p$. We have:
	$$\vert \eta_j^k-1\vert=\vert \eta_j^{k_0}-1\vert^{p^{v_p(k)}}=\vert\eta_{j,(1)}\vert^{p^{v_p(k)}}$$	
	and this finishes the proof of part (ii). Part (iii) follows from parts (i), (ii), and the definition of $r(A)$.
	\end{proof}
	
	\begin{proof}[Proof of Theorem~\ref{thm:zeta}]
	First, we prove part (a). We are given that for every $j\in\{1,\ldots,N\}$, there exists $i\in\{1,\ldots,M\}$ such that $m_i\mid n_j$. 
	
	Let $k\geq 1$. If $m_i\mid k$ for some $i$ then $N_k(A)=0$ by part (c) of Proposition~\ref{prop:main formulas}. If $m_i\nmid k$ for every $i\in\{1,\ldots,M\}$ then $n_j\nmid k$ for every $j\in\{1,\ldots,N\}$ thanks to the above assumption, then we have $N_k(A)=r(A)^k$ by Proposition~\ref{prop:main formulas}. Therefore $\displaystyle\sum_{k=1}^{\infty}\frac{N_k(A)}{k}z^k$ is equal to:
	\begin{align*}
	&\sum_{\substack{k\geq 1\\m_i\nmid k\text{ for }1\leq i\leq M}} \frac{N_k(A)}{k}z^k\\
	=&\sum_{\substack{k\geq 1\\m_i\nmid k\text{ for }1\leq i\leq M}} \frac{r(A)^k}{k}z^k\\
	=&\sum_{k\geq 1}\frac{r(A)^k}{k}z^k-\sum_{\substack{k\geq 1\\m_i\mid k\text{ for some }1\leq i\leq M}} \frac{r(A)^k}{k}z^k\\
	=&-\log(1-r(A)z)\\
	&-\sum_{\ell=1}^{M}\sum_{1\leq i_1<\ldots<i_{\ell}\leq M}(-1)^{\ell-1}\sum_{\substack{k\geq 1\\ \lcm(m_{i_1},\ldots,m_{i_{\ell}})\mid k}}\frac{r(A)^k}{k}z^k\\
	=&-\log(1-r(A)z)\\
	 &+\sum_{\ell=1}^{M}\sum_{1\leq i_1<\ldots<i_{\ell}\leq M}\frac{(-1)^{\ell+1}}{\lcm(m_{i_1},\ldots,m_{i_{\ell}})}\log\left(1-(r(A)z)^{\lcm(m_{i_1},\ldots,m_{i_{\ell}})}\right)
	\end{align*}
	where the third ``$=$'' follows from the inclusion-exclusion principle. This finishes the proof of part (a).
	
	For part (b), without loss of generality, we assume that $m_i\nmid n_1$ for $1\leq i\leq M$.
	Put
	$$f(z):=\sum_{k=1}^{\infty}N_k(A)z^k.$$
	Proposition~\ref{prop:main formulas} gives that 
	$\vert N_k(A)\vert\leq r(A)^k$, hence $f$ is convergent in the disk
	of radius $1/r(A)$. Assume that $f$ is D-finite and we arrive at a contradiction. Consider
	\begin{equation}\label{eq:c_k}
		c_k:=\frac{N_k(A)}{r(A)^k}\ \text{for $k=1,2,\ldots$}
	\end{equation}
	then the series
	$$\sum_{k=1}^\infty c_kz^k=f(z/r(A))$$
	is D-finite. Let $\tau$ denote the ramification index of $K/\R_F$, then each $\vert\eta_{j,(1)}\vert$ has the form $\displaystyle\frac{1}{q^{d_j/\tau}}$ where $d_j$ is a positive integer \cite[p.~150]{Neu99_AN}. Combining this with \eqref{eq:c_k} and Proposition~\ref{prop:main formulas}, we have that the $c_k$'s belong to the number field $E:=\Q(p^{1/\tau})$. Let $\vert\cdot\vert_p$ denote the $p$-adic absolute value on $\Q$, then $\vert\cdot\vert_p$ extends uniquely to an absolute value on $E$ since there is only one prime ideal of the ring of integers of $E$ lying above $p$. Put:
	$$Q=\prod_{1\leq j\leq N} \vert\eta_{j,(1)}\vert\ \text{and}\ Q_1=\prod_{\substack{1\leq j\leq N\\ n_j\mid n_1}}\vert \eta_{j,(1)}\vert.$$
	Since both $Q$ and $Q_1$ are powers of $1/q^{1/\tau}$ with positive integer exponents, we have:
	\begin{equation}\label{eq:QpQ1p}
	\vert Q\vert_p,\vert Q_1\vert_p >1.
	\end{equation}
	Since $m_i\nmid n_1$ for every $i$, Proposition~\ref{prop:main formulas}  and \eqref{eq:c_k}
	yield:
	\begin{equation}\label{eq:cn1pell}
	c_{n_1p^{\ell}}=Q_1^{p^{\ell}}\ \text{for every integer $\ell\geq 0$.}
	\end{equation}
	On the other hand, Proposition~\ref{prop:main formulas} and \eqref{eq:c_k} also yield:
	\begin{equation}\label{eq:ckp upper bound}
	\vert c_k\vert_p\leq \vert Q\vert_p^{p^{v_p(k)}}\ \text{for every integer $k>1$.}
	\end{equation}
	
	The idea to finish the proof is as follows. D-finiteness of the series 
	$\displaystyle\sum_{k=1}^{\infty} c_kz^k$ implies a strong restriction on the 
	``growth'' of the coefficients $c_k$'s at least through a recurrence relation satisfied by the $c_k$'s. This growth could be in terms of local data such as  absolute values of the $c_k$'s or global data such as Weil heights of the $c_k$'s \cite{BNZ20_DF}. It is indeed the $\vert c_k\vert_p$'s that will give us the desired contradiction. The key observation is that when $\ell$ is large $\displaystyle \vert c_{n_1p^\ell}\vert_p=\vert Q_1\vert_p^{p^\ell}$ is exponential in $p^{\ell}$ thanks to \eqref{eq:QpQ1p}
	and \eqref{eq:cn1pell} while the ``nearby'' coefficients $c_{n_1p^{\ell}-n}$ for a \emph{bounded} positive integer $n$ have small $p$-adic absolute values thanks to \eqref{eq:ckp upper bound} since
	$v_p(n_1p^{\ell}-n)$ is small compared to $\ell$.
	
	Since $\displaystyle\sum_{k=1}^\infty c_kz^k\in E[[z]]$ is D-finite, there exist a positive integer $s$ and polynomials $P_0(z),\ldots,P_s(z)\in E[z]$ such that $P_0\neq 0$ and
	\begin{equation}\label{eq:P recursive}
	P_0(k)c_k+P_1(k)c_{k-1}+\ldots+P_s(k)c_{k-s}=0
	\end{equation}
	for all sufficiently large $k$ \cite{Sta80_DF}. In the following $\ell$ denotes a large positive integer and the implied constants in the various estimates are independent of $\ell$. Consider $k=n_1p^{\ell}$, then the highest power of $p$ dividing any of the $k-i=n_1p^{\ell}-i$ for $1\leq i\leq s$ is at most the largest power of $p$ in $\{1,2,\ldots,s\}$. Combining this with \eqref{eq:ckp upper bound}, we have:
	\begin{equation}\label{eq:Picn1pell-i}
	\vert P_i(n_1p^{\ell})c_{n_1p^{\ell}-i}\vert_p \ll 1\ \text{for $1\leq i\leq s$.}
	\end{equation}
	
	 Now \eqref{eq:cn1pell}, \eqref{eq:P recursive}, and \eqref{eq:Picn1pell-i} imply:
	 \begin{equation}\label{eq:exponentially small p-adic}
	 \vert P_0(n_1p^{\ell})\vert_p\ll \vert Q_1\vert_p^{-p^{\ell}}. 
	 \end{equation}
	This means for the infinitely many positive integers $k$ of the form $n_1p^{\ell}$, we have that $\vert P_0(k)\vert_p$ is exponentially small in $k$. This implies that $k$ is unusually close to a root of $P_0$ with respect to the $p$-adic absolute value. One can use the product formula to arrive at a contradiction, as follows. 
	
	Let $M_E=M_E^{0}\cup M_E^{\infty}$ be the set of all places of $E$ where $M_E^0$ consists of the finite places and $M_E^{\infty}$ denotes the set of all the infinite places \cite[Chapter~1]{BG06_HI}. For every $w\in M_E$, we normalize $\vert\cdot\vert_w$ as in \cite[Chapter~1]{BG06_HI} and the product formula holds. We still use $p$ to denote the only place of $E$ lying above $p$
	and the above $\vert\cdot\vert_p$ has already been normalized according to \cite[Chapter~1]{BG06_HI}. We have:
	\begin{equation}\label{eq:all the w that are not p}
	\prod_{w\in M_K^{\infty}} \vert P_0(n_1p^{\ell})\vert_w \ll (n_1p^{\ell})^{\deg(P_0)}\ \text{and}\ 
	\prod_{w\in M_K^0\setminus\{p\}} \vert P_0(n_1p^{\ell})\vert_w\ll 1.
	\end{equation}
	When $\ell$ is sufficiently large and $P_0(n_1p^{\ell})\neq 0$, we have that \eqref{eq:QpQ1p},  \eqref{eq:exponentially small p-adic} and \eqref{eq:all the w that are not p} contradict the product formula:
	$$\prod_{w\in M_K}\vert P_0(n_1p^{\ell})\vert_w=1$$
	and this finishes the proof that $\displaystyle f(z)=\sum_{k=1}^\infty N_k(A)z^k$ is not D-finite. The transcendence of $\zeta_A(z)$ follows immediately: if $\zeta_A(z)$ were algebraic then 
	$\displaystyle f(z)=z\frac{\zeta_A'(z)}{\zeta_A(z)}$ would be algebraic and hence D-finite, see Remark~\ref{rem:algebraic implies D-finite}.
	\end{proof}

	\bibliographystyle{amsalpha}
	\bibliography{pTori} 	

\def\cprime{$'$} \def\cprime{$'$} \def\cprime{$'$} \def\cprime{$'$}
\providecommand{\bysame}{\leavevmode\hbox to3em{\hrulefill}\thinspace}
\providecommand{\MR}{\relax\ifhmode\unskip\space\fi MR }
\providecommand{\MRhref}[2]{%
  \href{http://www.ams.org/mathscinet-getitem?mr=#1}{#2}
}
\providecommand{\href}[2]{#2}
\begin{thebibliography}{BMW14}

\bibitem[AM65]{AM65_OP}
M.~Artin and B.~Mazur, \emph{On periodic points}, Ann. of Math. (2) \textbf{81}
  (1965), 82--99.

\bibitem[BG06]{BG06_HI}
E.~Bombieri and W.~Gubler, \emph{Heights in {D}iophantine geometry}, New
  Mathematical Monographs, vol.~4, Cambridge University Press, Cambridge, 2006.

\bibitem[BGNS]{BGNS22_AG}
J.~P. Bell, K.~Gunn, K.~D. Nguyen, and J.~C. Saunders, \emph{A general
  criterion for the {P}\'olya-{C}arlson dichotomy and application}, available
  on the arXiv, 2022.

\bibitem[BK83]{BK83_OL}
M.~Brin and A.~Katok, \emph{On local entropy}, Geometric dynamics (Rio de
  Janeiro, 1981),, Lecture Notes in Math., no. 1007, Springer-Verlag, 1983,
  pp.~30--38.

\bibitem[BL16]{BL16_AW}
V.~Bergelson and A.~Leibman, \emph{A {W}eyl-type equidistribution theorem in
  finite characteristic}, Adv. Math. \textbf{289} (2016), 928--950.

\bibitem[BL19]{BL19_LA}
P.-Y. Bienvenu and T.-H. Le, \emph{Linear and quadratic uniformity of the
  {M}\"obius function over $\mathbb{F}_q[t]$}, Mathematika \textbf{65} (2019),
  505--529.

\bibitem[BLP10]{BLP10_AN}
M.~Baake, E.~Lau, and V.~Paskunas, \emph{A note on the dynamical zeta function
  of general toral endomorphisms}, Monatsh. Math. \textbf{161} (2010), 33--42.

\bibitem[BMW14]{BMW14_TA}
J.~Bell, R.~Miles, and T.~Ward, \emph{Towards a p\'olya–carlson dichotomy for
  algebraic dynamics}, Indag. Math. (N.S.) \textbf{25} (2014), 652--668.

\bibitem[BNZ]{BNZ22_DF2}
J.~P. Bell, K.~D. Nguyen, and U.~Zannier, \emph{D-finiteness, rationality, and
  height {II}: lower bounds over a set of positive density}, arXiv:2205.02145.

\bibitem[BNZ20]{BNZ20_DF}
\bysame, \emph{D-finiteness, rationality, and height}, Trans. Amer. Math. Soc.
  \textbf{373} (2020), 4889--4906.

\bibitem[Bri12]{Bri12_TO}
A.~Bridy, \emph{Transcendence of the {A}rtin-{M}azur zeta function for
  polynomial maps of $\mathbb{A}^1(\bar{F}_p)$}, Acta Arith. \textbf{156}
  (2012), 293--300.

\bibitem[Bri16]{Bri16_TA}
\bysame, \emph{The {A}rtin-{M}azur zeta function of a dynamically affine
  rational map in positive characteristic}, J. Th\'eor. Nombres Bordeaux
  \textbf{28} (2016), no.~2, 301--324.

\bibitem[Car21]{Car21_U}
F.~Carlson, \emph{{\"U}ber ganzwertige funktionen}, Math. Z. \textbf{11}
  (1921), 1--23.

\bibitem[DF04]{DF04_AA}
D.~S. Dummit and R.~M. Foote, \emph{Abstract algebra}, third ed., Wiley, 2004.

\bibitem[Dwo60]{Dwo60_OT}
B.~Dwork, \emph{On the rationality of the zeta function of an algebraic
  variety}, Amer. J. Math. \textbf{82} (1960), 631--648.

\bibitem[Guc70]{Guc70_AA}
J.~Guckenheimer, \emph{Axiom {A}+{N}o {Cycles} {$\Longrightarrow \zeta_f(t)$}
  {Rational}}, Bull. Amer. Math. Soc. \textbf{76} (1970), 592--594.

\bibitem[Hay65]{Hay65_TD}
D.~R. Hayes, \emph{The distribution of irreducibles in {$GF[q,x]$}}, Trans.
  Amer. Math. Soc. \textbf{117} (1965), 101--127.

\bibitem[Hin94]{Hin94_ZF}
A.~Hinkkanen, \emph{Zeta functions of rational functions are rational}, Ann.
  Acad. Sci. Fenn. Ser. AI Math. \textbf{19} (1994), 3--10.

\bibitem[LW10]{LW10_WP}
Y.-R. Liu and T.~Wooley, \emph{Waring's problem in function fields}, J. reine
  angew. Math. \textbf{638} (2010), 1--67.

\bibitem[Man71]{Man71_AA}
A.~Manning, \emph{Axiom {A} diffeomorphisms have rational zeta functions},
  Bull. Lond. Math. Soc. \textbf{3} (1971), 215--220.

\bibitem[Neu99]{Neu99_AN}
J.~Neukirch, \emph{{A}lgebraic {N}umber {T}heory}, Grundlehren der
  mathematischen Wissenschaften, vol. 322, Springer-Verlag, 1999, Translated
  from the German by N. Schappacher.

\bibitem[Por18]{Por18_AN}
S.~Porritt, \emph{A note on exponential-{M}{\"o}bius sums over
  $\mathbb{F}_q[t]$}, Finite Fields Appl. \textbf{51} (2018), 298--305.

\bibitem[P{\'o}y28]{Pol28_U}
G.~P{\'o}ya, \emph{{\"U}ber gewisse notwendige {D}eterminantenkriterien f{\"u}r
  {F}ortsetzbarkeit einer {P}otenzreihe}, Math. Ann. \textbf{99} (1928),
  687--706.

\bibitem[Sta80]{Sta80_DF}
R.~Stanley, \emph{Differentiably finite power series}, European J. Combin.
  \textbf{1} (1980), 175--188.

\bibitem[VO16]{VO16_FO}
M.~Viana and K.~Oliveira, \emph{Foundations of {E}rgodic {T}heory}, Cambridge
  studies in advanced mathematics, vol. 151, Cambridge University Press,
  Cambridge, 2016.

\bibitem[Wal82]{Wal82_AI}
P.~Walters, \emph{An {I}ntroduction to {E}rgodic {T}heory}, Graduate Texts in
  Mathematics, vol.~79, Springer-Verlag, New York, 1982.

\bibitem[Wei49]{Wei49_NO}
A.~Weil, \emph{Numbers of solutions of equations in finite fields}, Bull. Amer.
  Math. Soc. \textbf{55} (1949), 497--508.

\end{thebibliography}
\end{document}